\newtheorem{theorem}{Theorem}[section]
\newtheorem{lemma}[theorem]{Lemma}
\newtheorem{proposition}[theorem]{Proposition}
\newtheorem{corollary}[theorem]{Corollary}
\newtheorem*{theorem*}{Theorem}
\theoremstyle{remark}
\newtheorem{remark}[theorem]{Remark}
\newtheorem{definition}[theorem]{Definition}
\newtheorem{example}[theorem]{Example}
\newcommand{\clspan}{\operatorname{\overline{\textnormal{span}}}}
\newcommand{\reg}{\textnormal{reg}}
\newcommand{\path}{\operatorname{Path}}
\numberwithin{equation}{section}
\newcommand{\N}{\mathbb{N}}
\newcommand{\C}{\mathbb{C}}
\newcommand{\T}{\mathbb{T}}
\newcommand{\Hi}{\mathcal{H}}
\newcommand{\gae}{\lower 2pt \hbox{$\, \buildrel {\scriptstyle >}\over {\scriptstyle
\sim}\,$}}
\newcommand{\lae}{\lower 2pt \hbox{$\, \buildrel {\scriptstyle <}\over {\scriptstyle
\sim}\,$}}
\newcommand{\MU}[1]{
\setbox0\hbox{$#1$}
\setbox1\hbox{$W$}
\ifdim\wd0>\wd1 #1^{\sim} \else \widetilde{#1} \fi
}
\begin{document}
\title[A class of $C^*$-algebras that are prime but not primitive]{A class of $\boldsymbol{C^*}$-algebras that are prime but not primitive}

\author{Gene Abrams}

\author{Mark Tomforde}

\address{Department of Mathematics \\ University of Colorado \\ Colorado Springs, CO 80918 \\USA}
\email{abrams@math.uccs.edu}

\address{Department of Mathematics \\ University of Houston \\ Houston, TX 77204-3008 \\USA}
\email{tomforde@math.uh.edu}

\thanks{This work was supported by Collaboration Grants from the Simons Foundation to each author (Simons Foundation Grant \#20894 to Gene Abrams and Simons Foundation Grant \#210035 to Mark Tomforde)}

\date{\today}

\subjclass[2010]{46L55}

\keywords{$C^*$-algebras, graph $C^*$-algebras, prime, primitive}

\begin{abstract}
We establish necessary and sufficient conditions on a (not necessarily countable) graph $E$ for the graph $C^*$-algebra $C^*(E)$ to be primitive.  Along with a known characterization of the graphs $E$ for which $C^*(E)$ is prime, our main result provides us with a systematic method for easily producing large classes of (necessarily nonseparable) $C^*$-algebras that are prime but not primitive.   We also compare and contrast our results with similar results for Leavitt path algebras.
\end{abstract}

\maketitle

\section{Introduction}

It is well known that any primitive $C^*$-algebra must be a prime $C^*$-algebra, and a partial converse was established by Dixmier in the late 1950's when he showed that every separable prime $C^*$-algebra is primitive (see \cite[Corollaire~1]{DixmierPaper}  or \cite[Theorem~A.49]{RW} for a proof).  For over 40 years after Dixmier's result, it was an open question as to whether every prime $C^*$-algebra is primitive.  This was answered negatively in 2001 by Weaver, who produced the first example of a (necessarily nonseparable) $C^*$-algebra that is prime but not primitive \cite{W}.  Additional ad hoc examples of $C^*$-algebras that are prime but not primitive have been given in \cite{C}, \cite[Proposition~31]{K},  and  \cite[Proposition~13.4]{K3}, with this last example being constructed as a graph $C^*$-algebra.   In this paper we identify necessary and sufficient conditions on the graph $E$ for the $C^*$-algebra $C^*(E)$ to be primitive.  Consequently, this will provide  a systematic way for easily describing large classes of (necessarily nonseparable) $C^*$-algebras that are prime but not primitive.  In particular, we obtain infinite classes of (nonseparable) AF-algebras, as well as infinite classes of non-AF, real rank zero $C^*$-algebras, that are prime but not primitive.

Compellingly, but perhaps not surprisingly, the conditions on $E$ for which $C^*(E)$ is primitive are identical to the conditions on $E$ for which the Leavitt path algebra $L_K(E)$ is primitive for any field $K$ \cite[Theorem~5.7]{ABR}.   However, as is typical in this context, despite the similarity of the statements of the results, the proofs for graph $C^*$-algebras are dramatically different from the proofs for Leavitt path algebras, and neither  result directly implies the other.

\smallskip

\smallskip

\noindent {\bf Acknowledgement:}   We thank Takeshi Katsura for providing us with useful comments and some helpful observations.

\section{Preliminaries on graph $C^*$-algebras}

In this section we establish notation and recall some standard definitions.

\begin{definition}
A \emph{graph} $(E^0, E^1, r, s)$ consists of a set $E^0$ of vertices, a set $E^1$ of edges, and maps $r : E^1 \to E^0$ and $s : E^1 \to E^0$ identifying the range and source of each edge.  
\end{definition}

\begin{definition}
Let $E := (E^0, E^1, r, s)$ be a graph. We say that a vertex $v
\in E^0$ is a \emph{sink} if $s^{-1}(v) = \emptyset$, and we say
that a vertex $v \in E^0$ is an \emph{infinite emitter} if
$|s^{-1}(v)| = \infty$.  A \emph{singular vertex} is a vertex that
is either a sink or an infinite emitter, and we denote the set of
singular vertices by $E^0_\textnormal{sing}$.  We also let
$E^0_\textnormal{reg} := E^0 \setminus E^0_\textnormal{sing}$, and
refer to the elements of $E^0_\textnormal{reg}$ as \emph{regular
vertices}; i.e., a vertex $v \in E^0$ is a regular vertex if and
only if $0 < |s^{-1}(v)| < \infty$.  A graph is \emph{row-finite}
if it has no infinite emitters.  A graph is \emph{finite} if both
sets $E^0$ and $E^1$ are finite.  A graph is \emph{countable} if both
sets $E^0$ and $E^1$ are (at most) countable.
\end{definition}

\begin{definition}
If $E$ is a graph, a \emph{path} is a finite sequence $\alpha := e_1 e_2
\ldots e_n$ of edges with $r(e_i) = s(e_{i+1})$ for $1 \leq i \leq
n-1$.  We say the path $\alpha$ has \emph{length} $| \alpha| :=n$,
and we let $E^n$ denote the set of paths of length $n$.  We
consider the vertices of $E$ (i.e., the elements of $E^0$) to be paths of length zero.  We
also let $\path (E) := \bigcup_{n \in \N \cup \{0\}} E^n$ denote the set of paths in $E$, and we extend the maps $r$ and $s$ to $\path (E)$
as follows: for $\alpha = e_1 e_2 \ldots e_n \in E^n$ with $n\geq
1$, we set $r(\alpha) = r(e_n)$ and $s(\alpha) = s(e_1)$; for
$\alpha = v \in E^0$, we set $r(v) = v = s(v)$.  Also, for $\alpha = e_1e_2\cdots e_n\in \path (E)$,  we let $\alpha^0$ denote the set of vertices that appear in $\alpha$; that is, $$\alpha^0  = \{ s(e_1), r(e_1), \ldots, r(e_n) \}.$$
\end{definition}

\begin{definition} \label{graph-C*-def}
If $E$ is a graph, the \emph{graph $C^*$-algebra} $C^*(E)$ is the universal
$C^*$-algebra generated by mutually orthogonal projections $\{ p_v
: v \in E^0 \}$ and partial isometries with mutually orthogonal
ranges $\{ s_e : e \in E^1 \}$ satisfying
\begin{enumerate}
\item $s_e^* s_e = p_{r(e)}$ \quad  for all $e \in E^1$
\item $s_es_e^* \leq p_{s(e)}$ \quad for all $e \in E^1$
\item $p_v = \sum_{\{ e \in E^1 : s(e) = v \}} s_es_e^* $ \quad for all $v \in E^0_\textnormal{reg}$.
\end{enumerate}
\end{definition}

\begin{definition} \label{s-alpha-def}
We call Conditions (1)--(3) in Definition~\ref{graph-C*-def} the \emph{Cuntz-Krieger relations}.  Any collection $\{ S_e, P_v : e \in E^1, v \in E^0 \}$ of elements of a $C^*$-algebra $A$,  where the $P_v$ are mutually orthogonal projections, the $S_e$ are partial isometries with mutually orthogonal ranges, and the Cuntz-Krieger relations are satisfied is called a \emph{Cuntz-Krieger $E$-family} in $A$.   The universal property of $C^*(E)$ says precisely that if $\{ S_e, P_v : e \in E^1, v \in E^0 \}$ is a Cuntz-Krieger $E$-family in a $C^*$-algebra $A$, the there exists a $*$-homomorphism $\phi : C^*(E) \to A$ with $\phi (p_v) = P_v$ for all $v \in E^0$ and $\phi(s_e)=S_e$ for all $e \in E^1$.

For a path $\alpha := e_1 \ldots e_n$, we define $S_\alpha := S_{e_1} \cdots S_{e_n}$; and when $|\alpha| = 0$, we have $\alpha = v$ is a vertex and define $S_\alpha := P_v$.
\end{definition}

\begin{remark}\label{productsofsalphasalphastar}
Using the orthogonality of the projections  $\{s_es_e^* : e\in E^1\}$, we see that if $\alpha, \beta \in {\rm Path}(E)$, then $s_\alpha s_\alpha^* s_\beta s_\beta^*$ is nonzero if and only if $\alpha = \beta \gamma$ or $\beta = \alpha \delta$ for some $\gamma, \delta \in {\rm Path}(E)$; in the former case we get $s_\alpha s_\alpha^* s_\beta s_\beta^* = s_\alpha s_\alpha^*$, while in the latter we get $s_\alpha s_\alpha^* s_\beta s_\beta^* = s_\beta s_\beta^*$.   Specifically, if $\mid \alpha \mid = \mid \beta \mid$, then $s_\alpha s_\alpha^* s_\beta s_\beta^*$ is nonzero precisely when $\alpha = \beta$, in which case the product yields $s_\alpha s_\alpha^*$. 
\end{remark}

\begin{definition}
A \emph{cycle} is a path $\alpha=e_1 e_2 \ldots e_n$ with length $|\alpha| \geq 1$ and $r(\alpha) = s(\alpha)$.  If $\alpha = e_1e_2 \ldots e_n$ is a cycle, an \emph{exit} for $\alpha$ is an edge $f \in E^1$ such that $s(f) = s(e_i)$ and $f \neq e_i$ for some $i$.  We say that a graph satisfies \emph{Condition~(L)} if every cycle in the graph has an exit.
\end{definition}

\begin{definition}
A \emph{simple cycle} is a cycle $\alpha=e_1 e_2 \ldots e_n$ with $r(e_i) \neq s(e_1)$ for all $1 \leq i \leq n-1$.  We say that a graph satisfies \emph{Condition~(K)} if no  vertex in the graph is the source of exactly one simple cycle.  (In other words, a graph satisfies Condition~(K) if and only if every vertex in the graph is the source of no simple cycles or the source of at least two simple cycles.)
\end{definition}

Our main use of Condition~(L) will be in applying the Cuntz-Krieger Uniqueness Theorem.  The Cuntz-Krieger Uniqueness Theorem was proven for row-finite graphs in \cite[Theorem~1]{BPRS}, and for countably infinite graphs in \cite[Corollary~2.12]{DT1} and \cite[Theorem~1.5]{RS}.  The result for possibly uncountable graphs is a special case of the Cuntz-Krieger Uniqueness Theorem \cite[Theorem~5.1]{K1} for topological graphs.  Alternatively, one can obtain the result in the uncountable case by using the version for countable graphs and applying the direct limit techniques described in \cite{RS} and \cite{Goo}.

\begin{theorem}[Cuntz-Krieger Uniqueness Theorem] \label{CKUT-thm}
If $E$ is a graph that satisfies Condition~(L) and $\phi : C^*(E) \to A$ is a $*$-homomorphism from $C^*(E)$ into a $C^*$-algebra $A$ with the property that $\phi(p_v) \neq 0$ for all $v \in E^0$, then $\phi$ is injective.
\end{theorem}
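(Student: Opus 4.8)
The plan is to run the classical conditional-expectation argument used for uniqueness theorems of this kind, with Condition~(L) entering at exactly one point, through a combinatorial lemma. \emph{Step 1 (the gauge action and a faithful expectation).} For each $z\in\T$ the family $\{\,z\,s_e,\,p_v\,\}$ is again a Cuntz--Krieger $E$-family (the Cuntz--Krieger relations are unaffected by the scalar $|z|^2=1$), so by the universal property of $C^*(E)$ it induces an automorphism $\gamma_z$ of $C^*(E)$, and $z\mapsto\gamma_z$ is a strongly continuous action of $\T$. Averaging over $\T$ produces a conditional expectation $\Phi(a):=\int_\T\gamma_z(a)\,dz$ of $C^*(E)$ onto the fixed-point algebra $\mathcal{F}:=C^*(E)^\gamma$; the one property I need from it is \emph{faithfulness}, i.e.\ $\Phi(a^*a)=0$ forces $a=0$. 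Concretely $\mathcal{F}=\clspan\{\,s_\alpha s_\beta^*:\ |\alpha|=|\beta|\,\}$, and $\Phi$ annihilates every $s_\alpha s_\beta^*$ with $|\alpha|\neq|\beta|$ while fixing the rest.

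\emph{Step 2 ($\phi$ is isometric on the core).} Here I would use that $\mathcal{F}$ is an AF-algebra: it is the closure of the increasing union of the subalgebras $\mathcal{F}_n:=\clspan\{\,s_\alpha s_\beta^*:\ |\alpha|=|\beta|=n\,\}$, and since $s_\alpha s_\beta^*=0$ unless $r(\alpha)=r(\beta)$, each $\mathcal{F}_n$ decomposes as a $c_0$-direct sum, over $w\in E^0$, of the elementary $C^*$-algebras $\mathcal{K}\bigl(\ell^2(\{\mu\in E^n:r(\mu)=w\})\bigr)$, with the $s_\alpha s_\beta^*$ serving as matrix units. For any path $\mu$ we have $\phi(s_\mu)^*\phi(s_\mu)=\phi(p_{r(\mu)})\neq0$, hence $\phi(s_\mu)\neq0$, hence $\phi(s_\mu s_\mu^*)=\phi(s_\mu)\phi(s_\mu)^*\neq0$; so $\phi$ is nonzero on each elementary summand of each $\mathcal{F}_n$, and each such summand being simple, $\phi$ is injective there. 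Since $\mathcal{F}$ is AF and $\phi$ is injective on each $\mathcal{F}_n$, it is injective on $\mathcal{F}$, hence isometric on $\mathcal{F}$.

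\emph{Step 3 (Condition~(L) and the conclusion).} Replacing $a$ by $a^*a$, it suffices to prove $\phi(a)\neq0$ whenever $0\le a\neq0$. Since $\Phi$ is faithful and contractive, $\Phi(a)\neq0$; rescaling $a$, assume $\|\Phi(a)\|=1$. Fix $\varepsilon>0$ and pick a finite sum $b=\sum_{(\alpha,\beta)\in F}c_{\alpha,\beta}\,s_\alpha s_\beta^*$ with $\|a-b\|<\varepsilon$, so $\|\Phi(b)\|>1-\varepsilon$. The crucial lemma, which is where Condition~(L) is used, asserts: there is a projection $Q\in\mathcal{F}$, a finite sum of terms $s_\mu s_\mu^*$ with the $\mu$ long paths, such that $Q\,s_\alpha s_\beta^*\,Q=0$ for every $(\alpha,\beta)\in F$ with $\alpha\neq\beta$, while $\|Q\,\Phi(b)\,Q\|>1-2\varepsilon$ and $\phi(Q)\neq0$. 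The role of Condition~(L) is transparent from the computation: a compression by a sufficiently long $s_\mu s_\mu^*$ automatically kills $s_\alpha s_\beta^*$ unless $\mu$ extends both $\alpha$ and $\beta$ in a way that forces a periodicity identity among the extending segments, and such an identity can only be realized along a cycle; routing the $\mu$'s through exits of all offending cycles---available exactly because $E$ satisfies Condition~(L)---destroys that obstruction while leaving enough of $\mathcal{F}$ to keep the compression of $\Phi(b)$ large and nonzero under $\phi$. Granting the lemma, $QbQ=Q\Phi(b)Q\in\mathcal{F}$, and therefore
\[
\|\phi(a)\|\ \ge\ \|\phi(QaQ)\|\ \ge\ \|\phi(QbQ)\|-\varepsilon\ =\ \|Q\Phi(b)Q\|-\varepsilon\ >\ 1-3\varepsilon ,
\]
using that $\phi$ is isometric on $\mathcal{F}$. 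Letting $\varepsilon\to0$ gives $\phi(a)\neq0$, so $\phi$ is injective. I expect this lemma---simultaneously annihilating every off-diagonal $s_\alpha s_\beta^*$ by a single compression while keeping the compression of $\Phi(b)$ both large in norm and nonzero under $\phi$, with cycles neutralized via Condition~(L)---to be the main obstacle; the remaining steps are routine $C^*$-algebraic bookkeeping.

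\emph{Step 4 (the possibly uncountable case).} Finally I would reduce to a countable graph. Any $a\in C^*(E)$, being a norm-limit of finite sums of elements $s_\alpha s_\beta^*$, lies in the $C^*$-subalgebra generated by $\{s_e,p_v\}$ over a countable set of edges and vertices, and that set can be enlarged to a countable subgraph $E_0\subseteq E$ that is \emph{complete} in the sense that every regular vertex of $E$ contained in $E_0^0$ emits in $E_0$ all of its $E$-edges, every infinite emitter of $E$ contained in $E_0^0$ remains an infinite emitter in $E_0$, and every cycle of $E_0$ has an exit in $E_0$; this completion process only adds countably many edges and vertices at each stage, so $E_0$ is countable. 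Then $E_0$ satisfies Condition~(L), the canonical $*$-homomorphism $C^*(E_0)\to C^*(E)$ is injective by the countable case (all $p_v$ are nonzero in $C^*(E)$), and the restriction of $\phi$ to $C^*(E_0)$ is injective by the countable case again; hence $a\in\ker\phi$ forces $a=0$, and $\phi$ is injective.
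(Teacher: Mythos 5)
First, a point of comparison: the paper does not prove this theorem at all --- it cites the row-finite case (Bates--Pask--Raeburn--Szyma\'nski), the countable case (Drinen--Tomforde, Raeburn--Szyma\'nski), Katsura's topological-graph version, and the direct-limit reduction for uncountable graphs. So what you are offering is a reconstruction of the standard gauge-action/conditional-expectation proof, and your Step 4 is essentially the direct-limit reduction the paper alludes to (your ``completeness'' conditions on $E_0$ are the right ones for the canonical map $C^*(E_0)\to C^*(E)$ to exist and for Condition~(L) to pass to $E_0$). That part is fine. But as a proof there are two genuine gaps.

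The first is in Step 2: the claim that $\mathcal{F}$ is the closure of the \emph{increasing} union of $\mathcal{F}_n=\clspan\{s_\alpha s_\beta^*:|\alpha|=|\beta|=n\}$ is false whenever $E$ has singular vertices, which the theorem explicitly allows and which your Step 4 does not remove (it only removes uncountability, not sinks or infinite emitters). If $w$ is a sink then $p_w\in\mathcal{F}_0$ but $p_w\notin\mathcal{F}_1$, and if $v$ is an infinite emitter then relation (3) is unavailable at $v$, so $\mathcal{F}_n\not\subseteq\mathcal{F}_{n+1}$ and ``injective on each $\mathcal{F}_n$'' does not yield injectivity on $\mathcal{F}$. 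The known fix is a modified filtration whose building blocks also contain the corners $s_\alpha\bigl(p_v-\sum_{e\in F}s_es_e^*\bigr)s_\beta^*$ for finite $F\subseteq s^{-1}(v)$ (and one must check $\phi$ is nonzero on these gap projections, e.g.\ by dominating some $\phi(s_fs_f^*)$ with $f\notin F$), or alternatively a prior desingularization reducing to row-finite graphs with no sinks; your argument contains neither. The second gap is that the lemma in Step 3 --- the single compression $Q$ that kills all off-diagonal $s_\alpha s_\beta^*$ in $F$ while keeping $\|Q\Phi(b)Q\|$ large, which is exactly where Condition~(L) does its work --- is stated, motivated heuristically (``routing the $\mu$'s through exits''), and then deferred as ``the main obstacle.'' That lemma \emph{is} the substantive content of the Cuntz--Krieger Uniqueness Theorem: one has to locate the norm of $\Phi(b)$ on a minimal projection of the finite-dimensional subalgebra containing $\Phi(b)$, and then use exits of cycles to extend the corresponding paths so that no off-diagonal term survives the compression, with norm estimates. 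Without a proof of this lemma (and with Step 2 as written), the proposal is an outline of the classical argument rather than a proof; the remaining steps (faithfulness of $\Phi$, the $\varepsilon$-chase, the countable reduction) are indeed routine.
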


It is a consequence of the Cuntz-Krieger Uniqueness Theorem that if $E$ is a graph satisfying Condition~(L) and $I$ is a nonzero ideal of $C^*(E)$, then there exists $v \in E^0$ such that $p_v \in I$.  (To see this, consider the quotient map $q : C^*(E) \to C^*(E)/I$.)  

\begin{definition} \label{sat-hered-def}
If $v, w \in E^0$ we write $v \geq w$ to mean that there exists a path $\alpha \in \path (E)$ with $s(\alpha) = v$ and $r(\alpha) = w$.  (Note that the path $\alpha$ could be a single vertex, so that in particular we have $v \geq v$ for every $v \in E^0$.)

If $E$ is a graph, a subset $H \subseteq E^0$ is \emph{hereditary} if whenever $e \in E^1$ and $s(e) \in H$, then $r(e) \in H$.  Note that a short induction argument shows that whenever  $H$ is hereditary, $v \in H$, and $v \geq w$, then $w \in H$.

For $v\in E^0$ we define
$$H(v) := \{w \in E^0 \ | \ v\geq w\}.$$
It is clear that $H(v)$ is hereditary for each $v\in E^0$.   A hereditary subset $H$ is called \emph{saturated} if $\{ v \in E^0_\reg : r(s^{-1}(v)) \subseteq H \} \subseteq H$.  For any hereditary subset $H$, we let $$\overline{H} := \bigcap \{ K : \text{$K \subseteq H$ and $K$ is a saturated hereditary subset} \}$$ denote the smallest saturated hereditary subset containing $H$, and we call $\overline{H}$ the \emph{saturation} of $H$.  Note that if $H$ is hereditary and we define $H_0 := H$ and $H_n := H_{n-1} \cup \{ v \in E^0_\textnormal{reg} : r(s^{-1}(v)) \subseteq H_{n-1} \}$ for $n \in \N$, then $H_0 \subseteq H_1 \subseteq H_2 \subseteq \ldots$ and $\overline{H} = \bigcup_{n=0}^\infty H_n$.  It is clear that both of the sets $\emptyset$ and $E^0$ are hereditary subsets of $E^0$. 
\end{definition}

\begin{lemma} \label{sat-int-lem}
Let $E = (E^0, E^1, r, s)$ be a graph.  If $H \subseteq E^0$ and $K \subseteq E^0$ are hereditary subsets with $H \cap K = \emptyset$, then $\overline{H} \cap \overline{K} = \emptyset$.  
\end{lemma}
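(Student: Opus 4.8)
The plan is to reduce the statement to a one-sided assertion and then prove that by induction on the explicit construction of the saturation recorded just before the lemma. Concretely, I would first establish the following auxiliary claim: \emph{if $L, M \subseteq E^0$ are hereditary subsets with $L \cap M = \emptyset$, then $\overline{L} \cap M = \emptyset$.} Granting this, the lemma follows in two steps. Applying the claim with $L := H$ and $M := K$ gives $\overline{H} \cap K = \emptyset$. Now $\overline{H}$ is hereditary (it is a saturation), $K$ is hereditary, and $K \cap \overline{H} = \emptyset$, so applying the claim a second time with $L := K$ and $M := \overline{H}$ yields $\overline{K} \cap \overline{H} = \emptyset$, which is exactly the desired conclusion.

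For the auxiliary claim, recall from the discussion preceding the lemma that $\overline{L} = \bigcup_{n=0}^\infty L_n$, where $L_0 := L$ and $L_n := L_{n-1} \cup \{ v \in E^0_{\reg} : r(s^{-1}(v)) \subseteq L_{n-1} \}$ for $n \in \N$. I would prove by induction on $n$ that $L_n \cap M = \emptyset$ for all $n$. The base case $n = 0$ is precisely the hypothesis $L \cap M = \emptyset$. For the inductive step, assume $L_{n-1} \cap M = \emptyset$ and let $v \in L_n$. If $v \in L_{n-1}$, then $v \notin M$ by the inductive hypothesis. Otherwise $v \in E^0_{\reg}$ and $r(s^{-1}(v)) \subseteq L_{n-1}$; suppose toward a contradiction that $v \in M$. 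Since $M$ is hereditary, every edge $e$ with $s(e) = v$ satisfies $r(e) \in M$. Because $v$ is a regular vertex, $s^{-1}(v)$ is nonempty, so we may pick such an edge $e$, and then $r(e) \in r(s^{-1}(v)) \subseteq L_{n-1}$ while also $r(e) \in M$, contradicting $L_{n-1} \cap M = \emptyset$. Hence $v \notin M$, completing the induction. Taking the union over all $n$ gives $\overline{L} \cap M = \bigcup_{n=0}^\infty (L_n \cap M) = \emptyset$.

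There is no serious obstacle here; the argument is a short induction. The only point that requires any care is the inductive step, where one must use \emph{both} that $M$ is hereditary — to push membership of $v$ down to the ranges of its outgoing edges — and that $v$, being a regular vertex, actually emits at least one edge, since otherwise $r(s^{-1}(v))$ could be empty and the contradiction would collapse. I would also remark that the auxiliary claim makes no use of saturatedness of $M$ whatsoever; only heredity of $L$ and $M$ is needed, and the two applications above are legitimate because saturations are hereditary.
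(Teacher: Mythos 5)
Your proof is correct and follows essentially the same route as the paper's: both arguments induct along the filtration $H = H_0 \subseteq H_1 \subseteq \cdots$ with $\overline{H} = \bigcup_{n=0}^\infty H_n$ recorded before the lemma, using heredity together with the fact that a regular vertex emits at least one edge to rule out a common vertex at each stage. The only difference is organizational: the paper runs a simultaneous induction showing $H_n \cap K_n = \emptyset$, whereas you prove the one-sided claim $\overline{L} \cap M = \emptyset$ and apply it twice (legitimately, since $\overline{H}$ is saturated hereditary by definition), which has the minor advantage of invoking heredity only for the fixed sets $K$ and $\overline{H}$ rather than, implicitly, for the intermediate sets $H_n$.
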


\begin{proof}
Since $H \cap K = \emptyset$, we have $H_0 \cap K_0 = \emptyset$.  A straightforward inductive argument shows that $H_n \cap K_n = \emptyset$ for all $n \in \N$.  Since $H_0 \subseteq H_1 \subseteq H_2 \subseteq  \ldots$ and $K_0 \subseteq K_1 \subseteq K_2 \subseteq  \ldots$, it follows that $\bigcup_{n=0}^\infty H_n \cap \bigcup_{n=0}^\infty K_n = \emptyset$.  Hence $\overline{H} \cap \overline{K} = \emptyset$.
\end{proof}

If $H$ is a hereditary subset, we let 
$$I_H := \clspan \big( \{ s_\alpha s_\beta^* : \alpha,  \beta \in \path(E), \  r(\alpha) = r( \beta) \in H \}).$$
It is straightforward to verify that if $H$ is hereditary, then $I_H$ is a (closed, two-sided) ideal of $C^*(E)$ and $I_H = I_{\overline{H}}$.  In addition, the map $H \mapsto I_H$ is an injective lattice homomorphism from the lattice of saturated hereditary subsets of $E$ into the gauge-invariant ideals of $C^*(E)$.  (In particular, if $H$ and $K$ are saturated hereditary subsets of $E$, then $I_H \cap I_K = I_{H \cap K}$.)  When $E$ is row-finite, the lattice homomorphism $H \mapsto I_H$ is surjective onto the gauge-invariant ideals of $C^*(E)$.

\begin{definition}
A graph $E$ is \emph{downward directed} if for all $u, v \in E^0$, there exists $w \in E^0$ such that $u \geq w$ and $v \geq w$.
\end{definition}

\begin{lemma} \label{reachability-implies-in-I-lem}
Let $E = (E^0, E^1, r, s)$ be a graph, and let $v,w \in E^0$.  If $I$ is an ideal in $C^*(E)$, $p_v \in I$, and $v \geq w$, then $p_w \in I$.
\end{lemma}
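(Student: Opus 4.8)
The plan is to reduce to the case where $v \geq w$ via a single edge and then induct on the length of the connecting path. So suppose $\alpha = e_1 e_2 \cdots e_n$ is a path with $s(\alpha) = v$ and $r(\alpha) = w$. If $n = 0$ then $w = v$ and there is nothing to prove, so assume $n \geq 1$. Since $p_{s(e_1)} = p_v \in I$ and $I$ is an ideal, we get $s_{e_1}^* p_v s_{e_1} \in I$; using relation~(2) in the form $s_{e_1} s_{e_1}^* \leq p_{s(e_1)} = p_v$ together with $s_{e_1}^* s_{e_1} = p_{r(e_1)}$ (relation~(1)), a short computation gives $s_{e_1}^* p_v s_{e_1} = s_{e_1}^* (s_{e_1} s_{e_1}^*) s_{e_1} = (s_{e_1}^* s_{e_1})(s_{e_1}^* s_{e_1}) = p_{r(e_1)}$. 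Hence $p_{r(e_1)} \in I$.

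Now I would set $v_1 := r(e_1)$, observe that $v_1 \geq w$ via the path $e_2 \cdots e_n$ of length $n-1$, and repeat. Formally, this is an induction on $n$: the base case $n = 0$ is trivial, and the inductive step is exactly the one-edge computation above followed by applying the inductive hypothesis to $v_1$ and the path $e_2 \cdots e_n$. This yields $p_w \in I$.

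The only point requiring care is the one-edge computation, and it is genuinely routine: the inequality $s_{e_1} s_{e_1}^* \leq p_{s(e_1)}$ gives $p_{s(e_1)} s_{e_1} s_{e_1}^* p_{s(e_1)} = s_{e_1} s_{e_1}^*$ (as $s_{e_1} s_{e_1}^*$ is a subprojection of $p_{s(e_1)}$), equivalently $p_{s(e_1)} s_{e_1} = s_{e_1}$, so that $s_{e_1}^* p_{s(e_1)} s_{e_1} = s_{e_1}^* s_{e_1} = p_{r(e_1)}$ by relation~(1). Thus no real obstacle arises; the statement is essentially a bookkeeping consequence of the Cuntz-Krieger relations, and the main thing to get right is simply that membership in the ideal $I$ is preserved under the compression $x \mapsto s_{e_1}^* x s_{e_1}$.
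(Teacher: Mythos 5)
Your proof is correct and is essentially the paper's argument: the paper compresses in a single step, using $p_w = p_{r(\alpha)} = s_\alpha^*s_\alpha = s_\alpha^* p_{s(\alpha)} s_\alpha = s_\alpha^* p_v s_\alpha \in I$ for the whole path $\alpha$ from $v$ to $w$. Your edge-by-edge induction is just this same compression carried out one edge at a time, so the difference is purely organizational.
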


\begin{proof}
Let $\alpha$ be a path with $s(\alpha) = v$ and $r(\alpha) =w$.  Since $p_v \in I$, we have

\hspace{1in} $p_w = p_{r(\alpha)} = s_\alpha^* s_\alpha = s_\alpha p_{s(\alpha)} s_\alpha^* = s_\alpha p_v s_\alpha^* \in I.$
\end{proof}

The following  graph-theoretic notion will play a central role in this paper.

\begin{definition}
Let $E = (E^0, E^1, r, s)$ be a graph.  For $w\in E^0$, we define $$U(w) := \{ v \in E^0 : v \geq w \};$$ that is,  $U(w)$ is the set of vertices $v$ for which there exists a path from $v$ to $w$.   We say  $E$ satisfies the {\it Countable Separation Property}  if there exists a countable set $X\subseteq E^0$ for which $E^0 = \bigcup_{x\in X}U(x)$.
\end{definition}

\begin{remark}
It is useful to note that  $E$ does {\it not} satisfy the Countable Separation Property if and only if for every countable subset $X \subseteq E^0$ we have $E^0 \setminus \bigcup_{x\in X}U(x) \neq \emptyset$.   
\end{remark}

\subsection{The notions of ``prime" and ``primitive" for algebras, and for $C^*$-algebras}

When we are working with a ring $R$, an \emph{ideal} in $R$ shall always mean a two-sided ideal.  When working with a $C^*$-algebra $A$, an \emph{ideal} in $A$ shall always mean a closed two-sided ideal.  If we have a two-sided ideal in a $C^*$-algebra that is not closed, we shall refer to it as an \emph{algebraic ideal}.  

Many properties for rings are stated in terms of two-sided ideals.  However, when working in the category of $C^*$-algebras it is natural to consider the corresponding $C^*$-algebraic properties stated in terms of closed two-sided ideals.  Thus for $C^*$-algebras, one may ask whether a given ring-theoretic property coincides with the corresponding $C^*$-algebraic property.  In the next few definitions we will consider the notions of prime and primitive, and explain how the ring versions of these properties coincide with the $C^*$-algebraic versions.  In particular, a $C^*$-algebra is prime as a $C^*$-algebra if and only if it is prime as a ring, and a $C^*$-algebra is primitive as a $C^*$-algebra if and only if it is primitive as a ring.  This will allow us to unambiguously refer to $C^*$-algebras as ``prime" or ``primitive".

If $I,J$ are two-sided ideals of a ring $R$, then the product $IJ$ is defined to be the two-sided ideal
$$IJ := \left\{ \sum_{\ell=1}^n i_\ell j_\ell  \ | \ n\in \N, i_\ell \in I, j_\ell  \in J \right\} .$$
\begin{definition} \label{prime-ring-def}
A ring $R$ is {\it prime} if whenever $I$ and $J$ are two-sided ideals of $R$ and $IJ = \{ 0 \}$, then either $I = \{ 0 \}$ or $J = \{ 0 \}$. 
\end{definition}

If $I,J$ are closed two-sided ideals of a $C^*$-algebra $A$, then the product $IJ$ is defined to be the closed two-sided ideal
$$IJ := \overline{IJ} = \overline{ \left\{ \sum_{\ell=1}^n i_\ell j_\ell  \ | \ n\in \N, i_\ell \in I, j_\ell  \in J \right\}} .$$

\begin{definition} \label{prime-C*-def}
A $C^*$-algebra $A$ is {\it prime} if whenever $I$ and $J$ are closed two-sided ideals of $A$ and $IJ = \{ 0 \}$, then either $I = \{ 0 \}$ or $J = \{ 0 \}$. 
\end{definition}

\begin{remark}
We note that if a ring $R$ admits a topology in which multiplication is continuous (e.g., if $R$ is a $C^*$-algebra), then it is straightforward to show that $R$ is prime if and only if $R$ has the property that whenever $I$ and $J$ are \emph{closed} two-sided ideals of $R$ and $IJ = \{ 0 \}$, then either $I = \{ 0 \}$ or $J = \{ 0 \}$.  Thus a $C^*$-algebra is prime as a ring (in the sense of Definition~\ref{prime-ring-def}) if and only if it is prime as a $C^*$-algebra (in the sense of Definition~\ref{prime-C*-def}).  Moreover, since any $C^*$-algebra has an approximate identity, and any closed two-sided ideal of a $C^*$-algebra is again a $C^*$-algebra, we get that whenever $I$ and $J$ are closed two-sided ideals in a $C^*$-algebra, then $IJ = I \cap J$.  Thus a $C^*$-algebra $A$ is prime if and only if whenever $I$ and $J$ are closed two-sided ideals in $A$ and $I \cap J = \{ 0 \}$, then either $I = \{ 0 \}$ or $J = \{ 0 \}$.   In addition, the existence of an approximate identity in a $C^*$-algebra implies that ideals of ideals are ideals.  In other words, if $A$ is a $C^*$-algebra, $I$ is a closed two-sided ideal of $A$, and $J$ is a closed two-sided ideal of $I$, then $J$ is a closed two-sided ideal of $A$.  Consequently, if $I$ is a closed two-sided ideal in a prime $C^*$-algebra, then $I$ is a prime $C^*$-algebra.
\end{remark}

\begin{definition}
Recall that for a ring $R$ a left $R$-module ${}_RM$ consists of an abelian group $M$ and a ring homomorphism $\pi : R \to \operatorname{End} (M)$ for which $\pi(R)(M) = M$, giving the module action $r \cdot m := \pi(r)(m)$.  We say that ${}_RM$ is faithful if the homomorphism $\pi$ is injective, and we say ${}_RM$ is \emph{simple} if $M \neq \{0\}$ and $M$ has no nonzero proper $R$-submodules; i.e., there are no nonzero proper subgroups $N \subseteq M$ with $\pi(r)(n) \subseteq N$ for all $r \in R$ and $n \in N$.  We make similar definitions for right $R$-modules $M_{R}$, faithful right $R$-modules, and simple right $R$-modules.
\end{definition}

\begin{definition} \label{primitive-ring-def}
Let $R$ be a ring.  We say that $R$ is \emph{left primitive} if there exists a faithful simple left $R$-module.  We say that $R$ is \emph{right primitive} if there exists a faithful simple right $R$-module.
\end{definition}

There are rings that are primitive on one side but not on the other. The first example was constructed by Bergman \cite{Berg, Berg-errata}.  In his ring theory textbook \cite[p.159]{Row}  Rowen also describes another example found by Jategaonkar that displays this distinction.

\begin{definition}   
If $A$ is a $C^*$-algebra, a \emph{$*$-representation} is a $*$-homomorphism $\pi : A \to B(\Hi)$ from the $C^*$-algebra $A$ into the $C^*$-algebra $B(\Hi)$ of bounded linear operators on some Hilbert space $\Hi$.  We say that a $*$-representation $\pi : A \to B(\Hi)$ is \emph{faithful} if it is injective.  For any subset $S \subseteq \Hi$, we define  $\pi(A)S := \clspan \{ \pi(a) h : a \in A \text{ and } h \in S \}$.  If $S = \{ h \}$ is a singleton set, we often write $\pi(A)h$ in place of $\pi(A)\{ h \}$.  A subspace $\mathcal{K} \subseteq \Hi$ is called a $\pi$-\emph{invariant subspace} (or just an \emph{invariant subspace})  if $\pi(a) k \in \mathcal{K}$ for all $a \in A$ and for all $k \in \mathcal{K}$.  Observe that a closed subspace $\mathcal{K} \subseteq \Hi$ is invariant if and only if $\pi(A) \mathcal{K} \subseteq \mathcal{K}$.
\end{definition}

\begin{definition}

Let $A$ be a $C^*$-algebra, and let $\pi : A \to B(\Hi)$ be a $*$-representation.

\begin{itemize}
\item[(1)] We say $\pi$ is a \emph{countably generated $*$-representation} if there exists a countable subset $S \subseteq \Hi$ such that $\pi (A) S = \Hi$.
\item[(2)] We say $\pi$ is a \emph{cyclic $*$-representation} if there exists $h \in H$ such that $\pi(A) h = H$.  (Note that a cyclic $*$-representation could also be called a \emph{singly generated $*$-representation}.)
\item[(3)] We say $\pi$ is an \emph{irreducible $*$-representation} if there are no closed invariant subspaces of $\Hi$ other than $\{ 0 \}$ and $\Hi$.  We note that $\pi$ is irreducible if and only if $\pi(A) h = \Hi$ for all $h \in \Hi \setminus \{ 0 \}$. 
\end{itemize}
\end{definition}

\begin{remark}
One can easily see that for any $*$-representation of a $C^*$-algebra, the following implications hold: $$\text{irreducible} \implies \text{cyclic} \implies \text{countably generated}.$$  In addition, if $\Hi$ is a separable Hilbert space (and hence $\Hi$ has a countable basis), then $\pi$ is automatically countably generated. 
\end{remark}

\begin{definition}  \label{primitive-C*-def}
A $C^*$-algebra $A$ is \emph{primitive} if there exists a faithful irreducible $*$-representation $\pi : A \to B(\mathcal{H})$ for some Hilbert space $\mathcal{H}$. 
\end{definition}

The following result is well known among $C^*$-algebraists.

\begin{proposition} \label{primitive-ring-C*-same}
If $A$ is a $C^*$-algebra, then the following are equivalent.
\begin{itemize}
\item[(1)] $A$ is a left primitive ring (in the sense of Definition~\ref{primitive-ring-def}).
\item[(2)] $A$ is a right primitive ring (in the sense of Definition~\ref{primitive-ring-def}).
\item[(3)] $A$ is a primitive $C^*$-algebra (in the sense of Definition~\ref{primitive-C*-def}).
\end{itemize}
\end{proposition}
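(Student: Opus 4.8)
The plan is to prove the equivalence of the three conditions by establishing (3)$\Rightarrow$(1), (1)$\Leftrightarrow$(2), and (1)$\Rightarrow$(3), using standard facts from representation theory of $C^*$-algebras. For (3)$\Rightarrow$(1): if $\pi : A \to B(\Hi)$ is a faithful irreducible $*$-representation, then for any fixed unit vector $h \in \Hi$ the subspace $M := \pi(A)h$ (algebraic span, not closure) is a dense $A$-submodule of $\Hi$; I would show $M$ is in fact a simple left $A$-module. The key input is the Kadison transitivity theorem: irreducibility of $\pi$ forces $\pi(A)h = \Hi$ as a \emph{set} for every nonzero $h$, so every nonzero vector in $M$ generates all of $M$, hence $M$ has no nonzero proper submodules. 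Faithfulness of $\pi$ gives faithfulness of the module ${}_AM$ directly, since the module action is via $\pi$. This yields (1), and by the same argument applied to the right module $hA$ (or by the $*$-operation, which interchanges left and right) one gets (2) as well.

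For the converse direction (1)$\Rightarrow$(3) (the substantive one), suppose ${}_AM$ is a faithful simple left $A$-module. The natural idea is to produce from $M$ a faithful irreducible $*$-representation on a Hilbert space. First, the endomorphism ring $D := \operatorname{End}_A(M)$ is a division ring by Schur's lemma. The obstacle here — and what I expect to be the main difficulty — is that a purely algebraic simple module carries no a priori Hilbert space structure, so one cannot directly view it as a $*$-representation; one must instead pass through the primitive ideal. The clean route: the annihilator of ${}_AM$ is a primitive ideal of $A$ in the ring-theoretic sense, but since ${}_AM$ is faithful this annihilator is $\{0\}$. Now invoke the known fact (this is essentially \cite[Theorem~A.49]{RW} or standard primitive-ideal theory, e.g.\ as in Dixmier) that the ring-theoretic primitive ideals of a $C^*$-algebra coincide with the kernels of irreducible $*$-representations: every algebraically primitive ideal of a $C^*$-algebra is the kernel of an irreducible $*$-representation, and conversely. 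Applying this to the zero ideal produces an irreducible $*$-representation $\pi$ of $A$ with $\ker \pi = \{0\}$, i.e.\ a faithful irreducible $*$-representation, giving (3).

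For (1)$\Leftrightarrow$(2): the $*$-operation $a \mapsto a^*$ is a ring anti-automorphism of $A$, so it converts a faithful simple left $A$-module into a faithful simple right $A$-module and vice versa. Concretely, given ${}_AM$ faithful simple, define a right action of $A$ on the underlying abelian group by $m \cdot a := a^* \cdot m$; faithfulness and simplicity are preserved because $a \mapsto a^*$ is a bijection on $A$ reversing products. This immediately gives the two-sided symmetry, which is the phenomenon that fails for general rings (Bergman's example) but holds for $C^*$-algebras precisely because of the involution.

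The only real obstacle is the bridge in (1)$\Rightarrow$(3), where one needs the identification of algebraic primitivity with $C^*$-primitivity; but this is a well-documented result in the $C^*$-algebra literature (the point being that for a $C^*$-algebra $A$, an algebraic primitive ideal is automatically closed — any primitive ideal is an intersection of its containing maximal modular left ideals, and in a $C^*$-algebra these are closed — and then the GNS-type construction attached to a pure state yields the required irreducible $*$-representation with the prescribed kernel). I would cite \cite{RW} and \cite{DixmierPaper} for this and keep the proof short, since the proposition is stated as ``well known among $C^*$-algebraists.''
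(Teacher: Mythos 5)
Your proposal is correct and takes essentially the same route as the paper: left--right symmetry comes from the involution $a\mapsto a^*$ (equivalently, $A\cong A^{\mathrm{op}}$ as rings), and the equivalence of ring primitivity with $C^*$-primitivity rests on the standard Dixmier facts that algebraically irreducible representations of a $C^*$-algebra are equivalent to topologically irreducible $*$-representations (your Kadison-transitivity argument handling the easy direction is exactly what underlies those results), which is what the paper cites from \cite[2.8.4, 2.9.6, 2.9.7]{DixmierBook}. One small correction: \cite[Theorem~A.49]{RW} is Dixmier's ``separable prime implies primitive'' theorem, not the identification of algebraic primitive ideals with kernels of irreducible $*$-representations, so the bridge in your (1)$\Rightarrow$(3) should be cited to Dixmier's book rather than to that theorem.
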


\begin{proof}
If $A$ is a $C^*$-algebra and $A^\textrm{op}$ denotes the opposite ring of $A$, then we see that $a \mapsto a^*$ is a ring isomorphism from $A$ onto $A^\textrm{op}$.  Hence $A-\operatorname{Mod}$ and $A^{op}-\operatorname{Mod} \cong \operatorname{Mod}-A$ are equivalent categories.   (We mention that, in general, a $C^*$-algebra is not necessarily isomorphic as a $C^*$-algebra to its opposite $C^*$-algebra; see \cite{PV}.)

The equivalence of (1) and (3) is proven in \cite[Theorem~2.9.7, p.57]{DixmierBook} and uses the results of \cite[Corollary~2.9.6(i), p.57]{DixmierBook} and \cite[Corollary~2.8.4, p.53]{DixmierBook}, which show two things: (i) that any algebraically irreducible representation of $A$ on a complex vector space is algebraically equivalent to a topologically irreducible $*$-representation of $A$ on a Hilbert space, and (ii) that any two topologically irreducible $*$-representations of $A$ on a Hilbert space are algebraically isomorphic if and only if they are unitarily equivalent.
\end{proof}

\begin{remark}
In light of Proposition~\ref{primitive-ring-C*-same}, a $C^*$-algebra is \emph{primitive} in the sense of Definition~\ref{primitive-C*-def} if and only if it satisfies any (and hence all) of the three equivalent conditions stated in Proposition~\ref{primitive-ring-C*-same}.
\end{remark}

\section{Prime and primitive graph $C^*$-algebras}

In this section we establish graph conditions that characterize when the associated graph $C^*$-algebra is prime and when the associated graph $C^*$-algebra is primitive.

The following proposition was established in \cite[Theorem 10.3]{K} in the context of topological graphs, but for the convenience of the reader we provide a streamlined proof here for the context of graph $C^*$-algebras.  We also mention that this same result was obtained for countable graphs in \cite[Proposition~4.2]{BHRS}.)  

\begin{proposition}\label{primenessproposition}   
Let $E$ be any graph.  Then $C^*(E)$ is prime if and only if the following two properties hold:
\begin{itemize}
\item[(i)] $E$ satisfies Condition~(L), and
\item[(ii)] $E$ is downward directed.   
\end{itemize}
\end{proposition}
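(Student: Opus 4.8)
The plan is to prove the two implications separately, using the Cuntz–Krieger Uniqueness Theorem and the structure of the ideals $I_H$ as the main tools.

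For the ``only if'' direction, I would argue contrapositively: if either (i) or (ii) fails, I produce two nonzero ideals whose product (equivalently, intersection) is zero. If Condition~(L) fails, there is a cycle $\alpha$ with no exit based at a vertex $v$; a standard computation shows that the corner of $C^*(E)$ associated to the hereditary set generated by $v$ behaves like $C(\mathbb{T}) \otimes \mathcal{K}$ (or at least contains a nonzero central projection coming from a point of $\mathbb{T}$), and in particular the ideal $I_{H(v)}$ is not prime, so neither is $C^*(E)$ since a closed ideal of a prime $C^*$-algebra is prime. Alternatively, and more cleanly, one observes that when $\alpha$ is an exitless cycle the element $s_\alpha$ is a unitary in the corner $p_v C^*(E) p_v$ restricted appropriately, and one extracts two orthogonal nonzero ideals directly. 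If instead $E$ is not downward directed, there are vertices $u,v \in E^0$ with $H(u) \cap H(v) = \emptyset$. By Lemma~\ref{sat-int-lem}, $\overline{H(u)} \cap \overline{H(v)} = \emptyset$, so $I_{H(u)} \cap I_{H(v)} = I_{\overline{H(u)}} \cap I_{\overline{H(v)}} = I_{\overline{H(u)} \cap \overline{H(v)}} = I_\emptyset = \{0\}$, while both $I_{H(u)}$ and $I_{H(v)}$ are nonzero because they contain $p_u$ and $p_v$ respectively. Hence $C^*(E)$ is not prime.

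For the ``if'' direction, assume (i) and (ii) hold and let $I, J$ be nonzero closed two-sided ideals of $C^*(E)$; I must show $I \cap J \neq \{0\}$. Since $E$ satisfies Condition~(L), the consequence of the Cuntz–Krieger Uniqueness Theorem noted after Theorem~\ref{CKUT-thm} applies to each of the prime quotients — more precisely, applying it to the quotient maps shows that any nonzero ideal contains a vertex projection: there exist $u, v \in E^0$ with $p_u \in I$ and $p_v \in J$. By downward directedness, choose $w \in E^0$ with $u \geq w$ and $v \geq w$. Then Lemma~\ref{reachability-implies-in-I-lem} gives $p_w \in I$ and $p_w \in J$, so $0 \neq p_w \in I \cap J$. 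Therefore $I \cap J \neq \{0\}$, and since $IJ = I \cap J$ in a $C^*$-algebra, $C^*(E)$ is prime.

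The main obstacle is the Condition~(L) part of the ``only if'' direction: one must show that an exitless cycle genuinely obstructs primeness, which requires understanding the subalgebra generated by an exitless cycle (it is not merely a matter of manipulating the $I_H$ lattice, since the relevant ideals here need not be gauge-invariant). The cleanest route is to note that if $\alpha$ is an exitless cycle at $v$, then within the ideal $I_{H(v)}$ — or within a suitable corner — one identifies a copy of $C(\mathbb{T})$ sitting as a nondegenerate subalgebra whose nonzero closed ideals (pulled back) give a pair of orthogonal nonzero ideals of $C^*(E)$, contradicting primeness; this uses the fact that a closed two-sided ideal of a prime $C^*$-algebra is again prime, as recorded in the preliminaries. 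Everything else is a routine assembly of the lemmas already in hand.
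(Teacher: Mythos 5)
Your proposal is correct and takes essentially the same route as the paper: the Cuntz--Krieger Uniqueness Theorem plus downward directedness for sufficiency; disjoint hereditary sets, Lemma~\ref{sat-int-lem}, and the lattice map $H \mapsto I_H$ for the failure of downward directedness; and, for the failure of Condition~(L), the fact that an exitless cycle produces a closed ideal that cannot be prime together with the fact that closed ideals of prime $C^*$-algebras are prime -- the ``standard computation'' you defer to is supplied in the paper by citing that $I_H$ is Morita equivalent to $C^*(E_H) \cong M_n(C(\mathbb{T}))$ and that primeness is a Morita invariant. One small correction: your parenthetical that the ideal ``contains a nonzero central projection coming from a point of $\mathbb{T}$'' is false ($C(\mathbb{T})$, and hence $C(\mathbb{T}) \otimes \mathcal{K}$, has no nontrivial projections), but nothing in your main argument depends on it, and your alternative corner argument (with $s_\alpha$ a unitary generating $p_v C^*(E) p_v \cong C(\mathbb{T})$) also works once one checks that orthogonal ideals of the corner generate orthogonal ideals of $C^*(E)$.
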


\begin{proof}
First, let us suppose that $E$ satisfies properties (i) and (ii) from above.  If $I$ and $J$ are nonzero ideals in $C^*(E)$, then it follows from property~(i) and the Cuntz-Krieger Uniqueness Theorem for graph $C^*$-algebras that there exists $u,v \in E^0$ such that $p_u \in I$ and $p_v \in J$.  By property~(ii) there is a vertex $w \in E^0$ such that $u \geq w$ and $v \geq w$.  It follows from Lemma~\ref{reachability-implies-in-I-lem} that $p_w \in I$ and $p_w \in J$.  Hence $0 \neq p_w = p_w  p_w \in IJ$,   and so $C^*(E)$ is a prime $C^*$-algebra.

For the converse, let us suppose that $C^*(E)$ is prime and establish properties (i) and (ii).  Suppose $C^*(E)$ is prime, and $E$ does not satisfy Condition~(L).  Then there exists a cycle $\alpha = e_1 \ldots e_n$ in $E$ that has no exits.  Since $\alpha$ has no exits, the set $H := \alpha^0 = \{ s(e_1), r(e_1), \ldots, r(e_{n-1}) \}$ is a hereditary subset of $E$, and it follows from \cite[Proposition~3.4]{BHRS} that the ideal $I_H = I_{\overline{H}}$ is Morita equivalent to the $C^*$-algebra of the graph $E_H := (H, s^{-1}(H), r|_H, s|_H)$.  Since $E_H$ is the graph consisting of a single cycle, $C^*(E_H) \cong M_n(C(\T))$ for some $n \in \N$ (see \cite[Lemma~2.4]{aHR}).  Therefore, the ideal $I_H$ is Morita equivalent to $C(\T)$.  However, since $C^*(E)$ is a prime $C^*$-algebra, it follows that the ideal $I_H$ is a prime $C^*$-algebra.  Because Morita equvalence preserves primality, and $I_H$ is Morita equivalent to $C(\T)$, it follows that $C(\T)$ is a prime $C^*$-algebra.  However, it is well known that $C(\T)$ is not prime: If $C$ and $D$ are proper closed subsets of $\T$ for which $C \cup D = \T$, and we set $I_C := \{ f \in C(\T) : f|_C \equiv 0 \}$ and $I_D := \{ f \in C(\T) : f|_D \equiv 0 \}$, then $I_C$ and $I_D$ are ideals with $I_C \cap I_D = \{ 0 \}$ but $I_C \neq \{ 0 \}$ and $I_D \neq \{ 0 \}$.  This provides a contradiction, and we may conclude that $E$ satisfies Condition~(L) and that property (i) holds.

Next, suppose $C^*(E)$ is prime.  Let $u, v \in E^0$, and consider $H(u) := \{ x \in E^0 : u \geq x \}$ and $H(v) := \{ x \in E^0 : v \geq x \}$.  Then $H(u)$ and $H(v)$ are nonempty hereditary subsets, and the ideals $I_{H(u)} = I_{\overline{H(u)}} $ and $I_{H(v)}=I_{\overline{H(v)}}$ are each nonzero.  Since $C^*(E)$ is a prime $C^*$-algebra, it follows that $I_{ \overline{H(u)} \cap \overline{H(v)}} = I_{\overline{H(u)}} \cap I_{\overline{H(v)}} \neq \{ 0 \}$.  Thus $\overline{H(u)} \cap \overline{H(v)} \neq \emptyset$, and Lemma~\ref{sat-int-lem} implies that $H(u) \cap H(v) \neq \emptyset$.  If we choose $w \in H(u) \cap H(v)$, then $u \geq w$ and $v \geq w$.  Hence $E$ is downward directed and property (ii) holds.
\end{proof}

The following is well known, but we provide a proof for completeness.

\begin{lemma}\label{primitiveimpliesprime}   Any  primitive $C^*$-algebra  is prime. 
\end{lemma}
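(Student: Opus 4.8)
The plan is to show that if $A$ is primitive, then the faithful irreducible $*$-representation witnessing primitivity forces $A$ to be prime. First I would let $\pi : A \to B(\Hi)$ be a faithful irreducible $*$-representation, which exists by Definition~\ref{primitive-C*-def}. Then I would take two nonzero closed two-sided ideals $I$ and $J$ of $A$ and aim to show $IJ \neq \{0\}$; by the remarks following Definition~\ref{prime-C*-def} it suffices to work directly with the product ideal, or equivalently (since these are $C^*$-algebra ideals) to show $I \cap J \neq \{0\}$.

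The key step is to observe that for any nonzero closed two-sided ideal $I$ of $A$, the subspace $\overline{\pi(I)\Hi}$ is a closed $\pi$-invariant subspace of $\Hi$: it is $\pi(A)$-invariant because $I$ is a left ideal, so $\pi(a)\pi(b)h = \pi(ab)h \in \pi(I)\Hi$ for $a \in A$, $b \in I$. Since $\pi$ is faithful and $I \neq \{0\}$, we have $\pi(I) \neq \{0\}$, so $\overline{\pi(I)\Hi}$ is a nonzero closed invariant subspace (nonzero because if $\pi(I)\Hi = \{0\}$ then $\pi(I) = \{0\}$, contradicting faithfulness). By irreducibility of $\pi$, this forces $\overline{\pi(I)\Hi} = \Hi$. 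The same argument gives $\overline{\pi(J)\Hi} = \Hi$.

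Now I would derive a contradiction from assuming $IJ = \{0\}$. If $IJ = \{0\}$, then $\pi(I)\pi(J) = \pi(IJ) = \{0\}$, so $\pi(I)\big(\pi(J)\Hi\big) = \{0\}$, and hence $\pi(I)\big(\overline{\pi(J)\Hi}\big) = \{0\}$ by continuity of each operator $\pi(a)$. But $\overline{\pi(J)\Hi} = \Hi$, so $\pi(I)\Hi = \{0\}$, forcing $\pi(I) = \{0\}$, which contradicts $\pi$ faithful and $I \neq \{0\}$. Therefore $IJ \neq \{0\}$, and $A$ is prime.

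I do not expect a serious obstacle here; the only point requiring a little care is the passage from $\pi(I)\pi(J)\Hi = \{0\}$ to $\pi(I)\Hi = \{0\}$, which relies on $\overline{\pi(J)\Hi}$ being all of $\Hi$ together with the boundedness (hence continuity) of the operators in $\pi(I)$ so that the vanishing on a dense subspace extends to the whole space. One could alternatively phrase the whole argument using the characterization ``$\pi$ is irreducible iff $\pi(A)h = \Hi$ for every nonzero $h$'' to pick a cyclic vector inside $\pi(J)\Hi$, but the invariant-subspace formulation above is cleaner and avoids choosing vectors.
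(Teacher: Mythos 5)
Your proposal is correct and follows essentially the same route as the paper: show that $\overline{\pi(J)\Hi}$ is a nonzero closed invariant subspace, use irreducibility to conclude it is all of $\Hi$, and then deduce from $IJ=\{0\}$ that $\pi(I)\Hi=\{0\}$, contradicting faithfulness. The only difference is cosmetic (contradiction versus the paper's direct contrapositive, and your explicit remark about extending the vanishing from a dense subspace by continuity, which the paper absorbs into its convention that $\pi(J)\Hi$ denotes the closed span).
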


\begin{proof}
Let $A$ be a primitive $C^*$-algebra.  Then there exists a faithful irreducible $*$-representation $\pi : A \to B(\Hi)$.  Let $I$ and $J$ be ideals of $A$, and suppose that $I J = \{ 0 \}$.  If $J \neq \{ 0 \}$, then the faithfulness of $\pi$ implies $\pi(J) \Hi \neq \{0\}$, and the fact that $J$ is an ideal shows that $\pi(J) \Hi$ is a closed invariant subspace for $\pi$.  Since $\pi$ is irreducible, it follows that $\pi(J) \Hi = \Hi$.  Using the fact that $IJ = \{ 0 \}$, it follows that
$\{0\} = \pi(IJ)\mathcal{H} = \pi(I)\pi(J)\mathcal{H} =\pi(I)\mathcal{H}$.  Since $\pi$ is faithful, this implies that $I = \{ 0 \}$.  Thus $A$ is a prime $C^*$-algebra. 
\end{proof}

The following two lemmas are elementary, but very useful.

\begin{lemma} \label{uncountable-sum-inf-lem}
If $X \subseteq (0, \infty)$ is an uncountable subset of positive real numbers, then the sum $\displaystyle \sum_{x \in X} x$ diverges to infinity.
\end{lemma}

\begin{proof}
For each $n \in \N$, let $X_n := \{ x \in X : x \geq \frac{1}{n} \}$.  Then $X = \bigcup_{n=1}^\infty X_n$, and since $X$ is uncountable, there exists $n_0 \in \N$ such that $X_{n_0}$ is infinite.  Thus $\sum_{x \in X} x \geq \sum_{x \in X_{n_0}} x \geq \sum_{i=1}^\infty \frac{1}{n} = \infty$.
\end{proof}

\begin{lemma} \label{nouncountablesetsoforthogidempotents}
Let $A$ be a $C^*$-algebra, and let $\{p_i \  |  \ i\in I\}$ be a set of nonzero mutually orthogonal projections in $A$.  If there exists a $*$-representation $\pi : A \to B(\Hi)$ and a vector $\xi \in \Hi$ such that $\pi(p_i) \xi \neq 0$ for all $i \in I$, then $I$ is at most countable.
\end{lemma}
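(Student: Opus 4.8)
The plan is to exploit the fact that the projections $p_i$ are mutually orthogonal, so their images under $\pi$ are mutually orthogonal self-adjoint projections on $\Hi$. Since the vectors $\pi(p_i)\xi$ lie in mutually orthogonal subspaces of $\Hi$, an orthogonality/Bessel argument will force only countably many of them to be nonzero. Concretely, I would first record that for $i \neq j$ we have $\pi(p_i)\pi(p_j) = \pi(p_i p_j) = \pi(0) = 0$, and that each $\pi(p_i)$ is a projection (being the image of a projection under a $*$-homomorphism), so the subspaces $\pi(p_i)\Hi$ are pairwise orthogonal. Consequently the vectors $\eta_i := \pi(p_i)\xi$ are pairwise orthogonal vectors in $\Hi$.

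The key computation is Bessel's inequality for the orthogonal family $\{\eta_i\}$: for any finite subset $F \subseteq I$,
\[
\sum_{i \in F} \|\eta_i\|^2 = \Big\| \sum_{i \in F} \eta_i \Big\|^2 = \Big\| \pi\Big(\sum_{i \in F} p_i\Big)\xi \Big\|^2 \leq \|\xi\|^2,
\]
where the middle equality uses orthogonality of the $\eta_i$ and the inequality uses that $\sum_{i\in F} p_i$ is a projection (a finite sum of mutually orthogonal projections), hence has norm at most $1$, so $\pi$ of it has norm at most $1$. Taking the supremum over finite subsets $F$, we conclude $\sum_{i \in I} \|\eta_i\|^2 \leq \|\xi\|^2 < \infty$.

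Now the hypothesis says $\eta_i = \pi(p_i)\xi \neq 0$ for all $i \in I$, so $\|\eta_i\|^2 > 0$ for every $i \in I$; that is, $\{\|\eta_i\|^2 : i \in I\}$ is a set of strictly positive real numbers whose sum is finite. By Lemma~\ref{uncountable-sum-inf-lem}, such a set cannot be uncountable, so $I$ is at most countable. I do not anticipate any genuine obstacle here; the only mild subtlety is making sure the finite-sum estimate is phrased so that Lemma~\ref{uncountable-sum-inf-lem} applies verbatim (it is stated for subsets of $(0,\infty)$, and here the values $\|\eta_i\|^2$ are indexed by $i$, so strictly speaking one should pass to the set of distinct values or instead re-run the $X_n := \{i \in I : \|\eta_i\|^2 \geq 1/n\}$ argument directly — either phrasing is routine).
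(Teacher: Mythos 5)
Your proof is correct and follows essentially the same route as the paper: both arguments use that the $\pi(p_i)$ are mutually orthogonal projections, obtain $\sum_{i\in I}\|\pi(p_i)\xi\|^2 \leq \|\xi\|^2$ by a Pythagorean/Bessel estimate, and then invoke Lemma~\ref{uncountable-sum-inf-lem}. The only difference is cosmetic: the paper bounds the sum via the projection $\bigoplus_{i\in I}\pi(p_i)$ directly, whereas you use finite partial sums and take a supremum, which is an equally valid (and if anything slightly more careful) phrasing of the same estimate.
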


\begin{proof}
Let $P := \bigoplus_{i \in I} \pi(p_i)$ be the projection onto the direct sum of the images of the $\pi(p_i)$.  Since the $\pi (p_i)$ are mutually orthogonal projections, the Pythagorean theorem shows $\sum_{i \in I} \| \pi(p_i) \xi \|^2 = \| P \xi \|^2 \leq \| \xi \|^2 < \infty.$  Since each $\| \pi(p_i) \xi\|^2$ term is nonzero, and since any uncountable sum of positive real numbers diverges to infinity (see Lemma~\ref{uncountable-sum-inf-lem}), it follows that the index set $I$ is at most countable.
\end{proof}

The following proposition provides a necessary condition for a graph $C^*$-algebra to be primitive.

\begin{proposition}
If $E$ is a graph and $C^*(E)$ has a faithful countably generated $*$-representation, then $E$ satisfies the Countable Separation Property.
\end{proposition}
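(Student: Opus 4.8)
The plan is to use the given faithful, countably generated $*$-representation $\pi : C^*(E) \to B(\Hi)$ to exhibit an explicit countable set $X \subseteq E^0$ witnessing the Countable Separation Property. Write $S = \{\xi_m : m \in \N\}$ for a countable set with $\pi(C^*(E))S = \Hi$, and put
$$P := \{\mu \in \path(E) : \pi(s_\mu s_\mu^*)\xi_m \neq 0 \text{ for some } m\}, \qquad X := \{r(\mu) : \mu \in P\}.$$
First I would check that $X$ is countable. For fixed $m$ and fixed $j \in \N \cup \{0\}$, the projections $\{s_\mu s_\mu^* : |\mu| = j\}$ are mutually orthogonal (Remark~\ref{productsofsalphasalphastar}), so Lemma~\ref{nouncountablesetsoforthogidempotents}, applied to the single vector $\xi_m$, shows that only countably many paths $\mu$ of length $j$ satisfy $\pi(s_\mu s_\mu^*)\xi_m \neq 0$. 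Hence $P$ is a countable union (over the pairs $(m,j)$) of countable sets, so $P$, and therefore $X$, is countable.

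The substance is the claim $E^0 = \bigcup_{x\in X}U(x)$; since each $U(x)\subseteq E^0$, it is enough to show that every $v \in E^0$ reaches some vertex of $X$. Fix $v$. As $\pi$ is faithful, $\pi(p_v)$ is a nonzero projection, so its range $\pi(p_v)\Hi$ is a nonzero closed subspace. Since $\Hi = \pi(C^*(E))S = \clspan\{\pi(a)h : a \in C^*(E),\ h \in S\}$ and $\pi(p_v)$ is continuous, $\pi(p_v)\Hi \subseteq \clspan\{\pi(p_v a)h : a \in C^*(E),\ h \in S\}$; as the left side is nonzero, there exist $a \in C^*(E)$ and $m$ with $\pi(p_v a)\xi_m \neq 0$. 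Now I would invoke the standard fact $C^*(E) = \clspan\{s_\mu s_\nu^* : r(\mu) = r(\nu)\}$, together with $p_v s_\mu = s_\mu$ when $s(\mu) = v$ and $p_v s_\mu = 0$ otherwise, to conclude $p_v a \in \clspan\{s_\mu s_\nu^* : s(\mu) = v,\ r(\mu) = r(\nu)\}$. Approximating $p_v a$ well enough by a finite linear combination $b$ of such elements that $\pi(b)\xi_m \neq 0$, and then discarding from $b$ the terms that vanish on $\xi_m$, I obtain paths $\alpha,\beta$ with $s(\alpha) = v$, $r(\alpha) = r(\beta) =: w$, and $\pi(s_\alpha s_\beta^*)\xi_m \neq 0$.

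To close the loop I would use the identity
$$\|\pi(s_\alpha s_\beta^*)\xi_m\|^2 = \big\langle \pi(s_\beta s_\alpha^* s_\alpha s_\beta^*)\xi_m,\ \xi_m \big\rangle = \big\langle \pi(s_\beta s_\beta^*)\xi_m,\ \xi_m \big\rangle = \|\pi(s_\beta s_\beta^*)\xi_m\|^2,$$
where the middle step uses $s_\alpha^* s_\alpha = p_{r(\alpha)} = p_{r(\beta)}$ and $s_\beta p_{r(\beta)} s_\beta^* = s_\beta s_\beta^*$. Thus $\pi(s_\beta s_\beta^*)\xi_m \neq 0$, so $\beta \in P$ and $w = r(\beta) \in X$. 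Since $s(\alpha) = v$ and $r(\alpha) = w$, the path $\alpha$ shows $v \geq w$, i.e. $v \in U(w)$. As $v$ was arbitrary, $E^0 = \bigcup_{x\in X}U(x)$, so $E$ satisfies the Countable Separation Property.

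I expect the delicate point to be the bookkeeping around the spanning elements: one must record the \emph{range} vertices $r(\mu)$ in defining $P$ and $X$, because the constraint $r(\alpha) = r(\beta)$ built into $s_\alpha s_\beta^*$ is precisely what ties the vertex $w$ reachable from $v$ to the path $\beta$ certifying $w \in X$. The source vertices $s(\beta)$ appearing in these elements need have no relation to $v$, so an argument tracking those instead would not terminate.
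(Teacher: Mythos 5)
Your proof is correct and takes essentially the same route as the paper's: the same countability argument (fixed-length range projections are mutually orthogonal, then Lemma~\ref{nouncountablesetsoforthogidempotents} applied to each $\xi_m$), the same countable witnessing set of range vertices of paths $\beta$ with $\pi(s_\beta s_\beta^*)\xi_m \neq 0$, and the same transfer of nonvanishing from $\pi(s_\alpha s_\beta^*)\xi_m$ to $\pi(s_\beta s_\beta^*)\xi_m$. The only cosmetic difference is that you reduce to spanning monomials by left-multiplying $C^*(E) = \clspan\{ s_\mu s_\nu^* : r(\mu)=r(\nu)\}$ by $p_v$, obtaining $s(\alpha)=v$, whereas the paper works inside the ideal generated by $p_v$ and uses its description as $I_{H(v)}$ to get $r(\beta) \in H(v)$; both yield $v \geq r(\beta)$.
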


\begin{proof}
By hypothesis there is a faithful countably generated $*$-representation $\pi : C^*(E) \to B(\Hi)$.  Thus there exists a countable set of vectors $S := \{ \xi_i \}_{i=1}^\infty \subseteq \Hi$ with $\pi(C^*(E))S = \Hi$.  For every $n \in \N \cup \{ 0 \}$ and for every $i \in \N$, define
$$\Gamma_{n, i} := \{ \alpha \in \path (E) : |\alpha|=n \text{ and } \pi(s_\alpha s_\alpha^*) \xi_i \neq 0 \}.$$
(Recall that we view vertices as paths of length zero, and in this case $s_v = p_v$.)  By Remark \ref{productsofsalphasalphastar}, for any $n \in \N \cup \{ 0 \}$ the set $\{ s_\alpha s_\alpha^* : \alpha \in \path (E) \text{ and } |\alpha| = n \}$ consists of mutually orthogonal projections, and hence Lemma~\ref{nouncountablesetsoforthogidempotents} implies that for any $n \in \N \cup \{ 0 \}$ and for any $i \in \N$, the set $\Gamma_{n,i}$ is countable.

Define $$\Gamma := \bigcup_{n=0}^\infty \bigcup_{i=1}^\infty \Gamma_{n,i},$$
which is countable since it is the countable union of countable sets.  Also define
$$\Theta := \bigcup_{\alpha \in \Gamma} U(r(\alpha)).$$
Then $\Theta \subseteq E^0$ is a set of vertices, and we shall show that $\Theta = E^0$.   We note that $\Theta$ consists precisely of the vertices $v$ in $E$ for which there is a path from $v$ to $w$, where $w=r(\alpha)$ for a path $\alpha$ having the property that $\pi(S_\alpha S_\alpha^*)\xi_i$ is nonzero for some $i$.  

Let $v \in E^0$, and let $I$ denote the closed two-sided ideal of $C^*(E)$ generated by $p_v$.  
Since $I$ is a nonzero ideal and $\pi$ is faithful, it follows that $\pi(I) \Hi \neq \{ 0 \}$.  Thus
$$\pi(I) S = \pi(I C^*(E)) S = \pi(I) \pi(C^*(E)) S = \pi (I) \Hi \neq \{ 0 \}$$
and hence there exists $a \in I$ and $\xi_i \in S$ such that $\pi(a) \xi_i \neq 0$.  If $H(v) := \{ w \in E^0 : v \geq w \}$ is the hereditary subset of $E^0$ generated by $v$ (as given in Definition \ref{sat-hered-def}), then it follows from \cite[\S3]{BHRS} that 
$$I = I_{H(v)} = \clspan \{ s_\alpha s_\beta^* : \alpha, \beta \in \path (E) \text{ and } r(\alpha) = r(\beta) \in H(v) \}.$$  Since $a \in I$ and $\pi(a) \xi_i \neq 0$, it follows from the linearity and continuity of $\pi$ that there exists $\alpha, \beta \in \path(E)$ with $r(\alpha) = r(\beta) \in H(v)$ and $\pi(s_\alpha s_\beta^*) \xi_i \neq 0$.  Hence
$$\pi (s_\alpha s_\beta^*) \pi(s_\beta s_\beta^*) \xi_i = \pi(s_\alpha s_\beta^* s_\beta s_\beta^*) \xi_i = \pi(s_\alpha s_\beta^*) \xi_i \neq 0,$$ and thus $\pi(s_\beta s_\beta^*) \xi_i \neq 0$.  If we let $n := |\beta|$, then $\beta \in \Gamma_{n,i} \subseteq \Gamma$.  Since $r(\beta) \in H(v)$, it follows that $v \geq r(\beta)$ and $v \in U(r(\beta)) \subseteq  \bigcup_{\alpha \in \Gamma} U(r(\alpha)) = \Theta$.  Thus $E^0 = \Theta := \bigcup_{\alpha \in \Gamma} U(r(\alpha))$, and since $\Gamma$ is countable, $E$ satisfies the Countable Separation Property.
\end{proof}

\begin{corollary}
If $E$ is a graph and $C^*(E)$ has a faithful cyclic $*$-representation, then $E$ satisfies the Countable Separation Property.
\end{corollary}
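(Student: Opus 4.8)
The plan is to derive this immediately from the preceding Proposition, since the only gap between its hypothesis and the corollary's hypothesis is the difference between ``cyclic'' and ``countably generated,'' and this gap is bridged by a trivial observation already noted in the remark following the definition of these representation types.

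Concretely, suppose $\pi : C^*(E) \to B(\Hi)$ is a faithful cyclic $*$-representation, so that there exists $h \in \Hi$ with $\pi(C^*(E))h = \Hi$. First I would observe that the singleton set $S := \{ h \}$ is (trivially) countable and satisfies $\pi(C^*(E))S = \Hi$, so $\pi$ is a faithful countably generated $*$-representation of $C^*(E)$. Then I would simply apply the preceding Proposition to conclude that $E$ satisfies the Countable Separation Property.

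There is no real obstacle here; the entire content lies in the Proposition, and the corollary is a formal specialization using the implication ``cyclic $\implies$ countably generated.'' If one wished to make the write-up even more self-contained, one could alternatively remark that the corollary also follows a fortiori from the fact that every irreducible $*$-representation is cyclic, which is the case of primary interest for the application to primitivity in the next section; but for the statement as given, citing the Proposition with $S = \{h\}$ is the cleanest route.
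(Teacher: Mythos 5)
Your argument is correct and is exactly the route the paper intends: the corollary is an immediate specialization of the preceding Proposition, using the trivial observation that a cyclic $*$-representation is countably generated via $S=\{h\}$ (the implication already noted in the remark preceding Definition~\ref{primitive-C*-def}). The paper states the corollary without proof for precisely this reason, so your write-up matches it.
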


\begin{corollary} \label{primitive-implies-CSP-cor}
If $E$ is a graph and $C^*(E)$ is primitive, then $E$ satisfies the Countable Separation Property.
\end{corollary}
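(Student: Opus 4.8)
The plan is to obtain the corollary as an immediate specialization of the preceding proposition, using nothing more than the chain of implications for $*$-representations recorded earlier in this section. First I would invoke Definition~\ref{primitive-C*-def}: since $C^*(E)$ is primitive, there exists a faithful irreducible $*$-representation $\pi : C^*(E) \to B(\Hi)$ on some Hilbert space $\Hi$.

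Next I would observe that $\pi$ is in particular cyclic, hence countably generated: picking any nonzero vector $h \in \Hi$, irreducibility of $\pi$ forces $\pi(C^*(E))h = \Hi$, so the singleton $S = \{h\}$ is a countable subset of $\Hi$ with $\pi(C^*(E))S = \Hi$. Thus $\pi$ is a faithful countably generated $*$-representation of $C^*(E)$ (equivalently, one may route through the preceding corollary on faithful cyclic representations).

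Finally, I would apply the proposition stated just above (``if $C^*(E)$ has a faithful countably generated $*$-representation, then $E$ satisfies the Countable Separation Property'') to this $\pi$, which yields that $E$ satisfies the Countable Separation Property, as desired.

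There is no substantive obstacle here: all the work was already done in the proposition, and this corollary merely packages it through the trivial implication $\text{irreducible}\implies\text{cyclic}\implies\text{countably generated}$ noted in the remark preceding Definition~\ref{primitive-C*-def}. The proof is a two-line deduction.
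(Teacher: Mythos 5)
Your proof is correct and is exactly the route the paper intends: the corollary follows by noting that a faithful irreducible $*$-representation is cyclic, hence countably generated, and then applying the preceding proposition. Nothing further is needed.
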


Our main objective in this article is the following result, in which we  provide necessary and sufficient conditions on a graph $E$ for the $C^*$-algebra $C^*(E)$ to be primitive.  In particular, we identify the precise additional condition on $E$ that guarantees a prime graph $C^*$-algebra $C^*(E)$ is primitive.  With the previously mentioned result of Dixmier  \cite[Corollaire 1]{DixmierPaper} as context (i.e.,  that any prime separable $C^*$-algebra is primitive), we note that our additional condition is not a separability hypothesis on $C^*(E)$, but rather the Countable Separation Property of $E$.

\begin{theorem} \label{primitivitytheorem}
Let $E$ be any graph.  Then $C^*(E)$ is primitive if and only if the following three properties hold:
\begin{itemize}
\item[(i)] $E$ satisfies Condition~(L),
\item[(ii)] $E$ is downward directed, and
\item[(iii)] $E$ satisfies the Countable Separation Property.
\end{itemize}
In other words, by Proposition~\ref{primenessproposition}, $C^*(E)$ is primitive if and only if $C^*(E)$ is prime and $E$ satisfies the Countable Separation Property. 
\end{theorem}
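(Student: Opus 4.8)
The plan is to treat the two implications separately. The forward implication is immediate from what is already in place: if $C^*(E)$ is primitive then it is prime by Lemma~\ref{primitiveimpliesprime}, so (i) and (ii) hold by Proposition~\ref{primenessproposition}, while (iii) holds by Corollary~\ref{primitive-implies-CSP-cor}. The reverse implication carries all of the content, and I would establish it by running, essentially verbatim, the classical argument that a separable prime $C^*$-algebra is primitive (as in \cite[Appendix~A]{RW}), with the Countable Separation Property playing the role that second countability of the primitive ideal space plays there — this is exactly why the relevant hypothesis is the Countable Separation Property of $E$ rather than separability of $C^*(E)$.

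So assume (i), (ii), (iii); by Proposition~\ref{primenessproposition} the algebra $C^*(E)$ is prime (and we may assume $E^0\neq\emptyset$, since otherwise $C^*(E)=\{0\}$ and none of the notions apply). I would work inside $T := \prim C^*(E)$ with its hull-kernel topology, using three standard facts: (a) the map $I\mapsto \widehat{I} := \{P\in T : I\not\subseteq P\}$ is an inclusion-preserving bijection from the ideals of $C^*(E)$ onto the open subsets of $T$, with $\widehat{I\cap J}=\widehat{I}\cap\widehat{J}$ and $\widehat{I}=\emptyset\iff I=\{0\}$ (the latter because $\bigcap_{P\in T}P=\{0\}$, $C^*$-algebras being semiprimitive); (b) $T$ is a Baire space — this holds for the primitive ideal space of \emph{any} $C^*$-algebra, and is precisely the ingredient that drives Dixmier's theorem (see \cite{DixmierBook} or \cite[Appendix~A]{RW}); and (c) $C^*(E)$ is primitive if and only if $\{0\}\in T$. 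Since $C^*(E)$ is prime, (a) shows that $T$ is an \emph{irreducible} topological space: any two nonempty open subsets of $T$ meet.

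The only step where the hypotheses on $E$ are used nontrivially is in extracting from the Countable Separation Property a countable $\pi$-base for $T$. Fix a countable $X\subseteq E^0$ with $E^0=\bigcup_{x\in X}U(x)$, and for $x\in X$ let $\langle p_x\rangle$ denote the closed two-sided ideal of $C^*(E)$ generated by $p_x$, which is nonzero since $p_x\neq 0$. I claim every nonzero ideal $I$ of $C^*(E)$ contains $\langle p_x\rangle$ for some $x\in X$: by Condition~(L) and the Cuntz--Krieger Uniqueness Theorem (Theorem~\ref{CKUT-thm}) applied to the quotient map $C^*(E)\to C^*(E)/I$, there is $v\in E^0$ with $p_v\in I$; by the Countable Separation Property $v\geq x$ for some $x\in X$; and then $p_x\in I$ by Lemma~\ref{reachability-implies-in-I-lem}, so $\langle p_x\rangle\subseteq I$. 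Transported through the bijection in (a), the countable family $\{\widehat{\langle p_x\rangle} : x\in X\}$ consists of nonempty open subsets of $T$, and every nonempty open subset of $T$ contains one of them.

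I would finish with the generic-point argument. Enumerate the family above as $(U_n)_{n\in\N}$. Each $U_n$ is a nonempty open subset of the irreducible space $T$, hence dense; since $T$ is Baire, $\bigcap_n U_n$ is dense, in particular nonempty, so pick $P_0\in\bigcap_n U_n$. Every nonempty open $V\subseteq T$ contains some $U_n$ and hence contains $P_0$, so $\overline{\{P_0\}}=T$; since $\overline{\{P_0\}}=\{P\in T : P\supseteq P_0\}$, this forces $P_0\subseteq\bigcap_{P\in T}P=\{0\}$, i.e.\ $P_0=\{0\}\in T$, and $C^*(E)$ is primitive by (c). The main non-formal obstacle is fact (b) — the Baire property of $\prim C^*(E)$ in full, possibly non-separable, generality — on which the whole argument rests; everything else is bookkeeping with the hull-kernel topology together with the already-established primeness criterion (Proposition~\ref{primenessproposition}) and the Cuntz--Krieger Uniqueness Theorem.
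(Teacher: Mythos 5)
Your proposal is correct, but it takes a genuinely different route from the paper. The necessity half coincides with the paper's (Lemma~\ref{primitiveimpliesprime}, Proposition~\ref{primenessproposition}, Corollary~\ref{primitive-implies-CSP-cor}). For sufficiency, the paper argues purely ring-theoretically: using downward directedness it builds a nested sequence of projections $s_{\lambda_n}s_{\lambda_n}^*$ such that every nonzero closed ideal contains some $s_{\lambda_n}s_{\lambda_n}^*$, then forms the modular left ideal $L=\{\sum_i (x_i-x_is_{\lambda_i}s_{\lambda_i}^*)\}$, extracts via Zorn's Lemma a maximal modular (hence closed) left ideal $M$ with $p_{v_1}\notin M$, and checks that $C^*(E)/M$ is a simple faithful module, so $C^*(E)$ is primitive as a ring and hence as a $C^*$-algebra by Proposition~\ref{primitive-ring-C*-same}; this parallels the Leavitt path algebra proof. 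You instead run Dixmier's Baire-category argument on $T=\prim C^*(E)$: primeness makes $T$ irreducible, and the key graph-theoretic input (Condition~(L) plus the Cuntz--Krieger Uniqueness Theorem plus the Countable Separation Property, exactly as in the paper's ideal-detection step, Lemma~\ref{reachability-implies-in-I-lem}) produces the countable family $\{\widehat{\langle p_x\rangle}:x\in X\}$, which is a countable $\pi$-base of nonempty (hence dense) open sets; the Baire property of $\prim$ then yields a generic point, which must be $\{0\}$ since $C^*$-algebras are semiprimitive. Your steps (a), (c), and the claim that every nonzero ideal contains some $\langle p_x\rangle$ all check out, and the one heavy ingredient, (b), is indeed a theorem valid for arbitrary (non-separable) $C^*$-algebras (Dixmier 3.4.13; Pedersen 4.3.5), so the argument is complete — though note that \cite[Appendix~A]{RW} only treats the second-countable/separable situation, so Dixmier's book is the citation that actually carries the load. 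The trade-off: your route is shorter and conceptually transparent (it exhibits the Countable Separation Property as precisely a countable $\pi$-base condition on $\prim C^*(E)$, with downward directedness entering only through primeness), but it leans on the general Baire theorem for $\prim$; the paper's construction is longer but self-contained at the level of elementary Banach-algebra facts (modular maximal left ideals are closed) and makes the parallel with the Leavitt path algebra proof explicit.
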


\begin{proof}  To prove this result we establish both the sufficiency and the necessity of properties (i), (ii), and (iii) for $C^*(E)$ to be primitive.

\smallskip

\noindent  \textsc{Proof of Sufficiency:}   Since $E$ satisfies the Countable Separation Property by (iii), there exists a countable set $X \subseteq E^0$ such that $E^0 = \bigcup_{x \in X} U(x)$.  Since $X$ is countable, we may list the elements of $X$ as $X := \{v_1, v_2, \ldots \}$, where this list is either finite or countably infinite.  For convenience of notation, let us write $X = \{ v_i \}_{i \in I}$ where the indexing set $I $ either has the form $I = \{ 1, \ldots, n \}$ or $I = \N$.

We inductively define a sequence of paths $\{ \lambda_i \}_{i \in I} \subseteq \path (E)$ satisfying the following two properties:
\begin{itemize}
\item[(a)] For each $i \in I$ we have $v_i \geq r(\lambda_i)$.
\item[(b)] For each $i \in I$ with $i \geq 2$ there exists $\mu_{i} \in \path (E)$ such that $\lambda_{i} = \lambda_{i-1} \mu_{i}$.
\end{itemize}

To do so, define $\lambda_1 := v_1$ and note that for $i=1$ Property~(a) is satisfied trivially and Property~(b) is satisfied vacuously.  Next suppose that $\lambda_1, \ldots \lambda_n$ have been defined so that Property~(a) and Property~(b) are satisfied for $1 \leq i \leq n$.   By hypothesis (ii) $E$ is downward directed, and hence there exists a vertex $u_{n+1}$ in $E$ such that $r(\lambda_n) \geq u_{n+1}$ and $v_{n+1}\geq u_{n+1}$.
 Let $\mu_{n+1}$ be a path from $r(\lambda_n)$ to $u_{n+1}$, and define $\lambda_{n+1} :=\lambda_{n}\mu_{n+1}$.  Then the paths $\lambda_1, \ldots, \lambda_{n+1}$ satisfy Property~(a) and Property~(b) for all $1 \leq i \leq n$.  Continuing in this manner, we either exhaust the elements of $I$ or inductively create a sequence of paths $\{ \lambda_i \}_{i \in I} \subseteq \path (E)$ satisfying Property~(a) and Property~(b) for all $i \in I$. 
 
  Note that, in particular, for each $i < n$, we have $\lambda_n = \lambda_i \mu_{i+1} \ldots \mu_n$, and hence by Remark \ref{productsofsalphasalphastar} for $i\leq n$ we have 
 $$s_{\lambda_i} s_{\lambda_i}^* s_{\lambda_n} s_{\lambda_n}^* = s_{\lambda_n} s_{\lambda_n}^*.$$

 We now establish for future use that every nonzero closed two-sided ideal $J$ of $C^*(E)$ contains  $s_{\lambda_n}s_{\lambda_n}^{\ast} $ for some $n \in I$.   Using hypothesis (i), our graph satisfies Condition~(L), and the Cuntz-Krieger Uniqueness Theorem (Theorem~\ref{CKUT-thm}) implies that there exists $w \in E^0$ such that $p_w \in J$.  By the Countable Separation Property  there exists $v_n \in X$ such that $w\geq v_n$.   
In addition, by Property~(a) above $v_n \geq r(\lambda_n)$, so that there is a path $\gamma$ in $E$ for which $s(\gamma)=w$ and $r(\gamma)=r(\lambda_n)$.  Since $p_w\in J$, we have by Lemma  \ref{reachability-implies-in-I-lem} that  $p_{r(\lambda_n)}  \in J$, so that $s_{\lambda_n}s_{\lambda_n}^* = s_{\lambda_n} p_{r(\lambda_n)} s_{\lambda_n}^*  \in J$ as desired.  

Define 
 $$L :=  \left\{ \sum_{i=1}^n (x_i - x_is_{\lambda_i}s_{\lambda_i}^{\ast}) :  n \in I \text{ and } x_1, \ldots, x_n \in C^*(E) \right\}.$$
Recall that $\lambda_1 = v_1$, and by our convention (see Definition~\ref{s-alpha-def}) $s_{\lambda_1} = p_{v_1}$ and $s_{\lambda_1} s_{\lambda_1}^* = p_{v_1}$.  Clearly $L$ is an algebraic (i.e., not necessarily closed) left ideal of $C^*(E)$.  We claim that $p_{v_1} \notin L$.  For otherwise  there would exist $n\in I$ and $x_1, ..., x_n \in C^*(E)$ with 
$$\sum_{i=1}^n (x_i - x_i s_{\lambda_i}s_{\lambda_i}^*) = p_{v_1} .$$  But then multiplying both sides of this equation by $s_{\lambda_n}s_{\lambda_n}^{\ast}$ on the right gives $$\sum_{i=1}^n (x_i s_{\lambda_n}s_{\lambda_n}^{\ast} - x_i s_{\lambda_i}s_{\lambda_i}^{\ast}s_{\lambda_n}s_{\lambda_n}^{\ast}) = s_{\lambda_n}s_{\lambda_n}^*.$$
Using the previously displayed observation,   the above equation becomes
$$\sum_{i=1}^n (x_i s_{\lambda_n}s_{\lambda_n}^{\ast} - x_i s_{\lambda_n}s_{\lambda_n}^*) =   s_{\lambda_n}s_{\lambda_n}^*,$$ which gives $0 =  s_{\lambda_n}s_{\lambda_n}^{\ast}$, a contradiction.    Hence we may conclude that $p_{v_1} \notin L$, and $L$ is a proper left ideal of $C^*(E)$.

By the definition of $L$, we have  $a - ap_{v_1} \in L$  for each $a\in C^*(E)$ and hence $L$ is a modular left ideal of $C^*(E)$, a property which necessarily passes to any left ideal of $C^*(E)$ containing $L$.   (See \cite{Con} for definitions of appropriate terms.  Also, cf.~\cite[Chapter 2, Theorem~2.1.1]{Ric}.)  Let $\mathcal{T}$ denote the set of (necessarily modular) left ideals of $C^*(E)$ that contain $L$ but do not contain $p_{v_1}$.   Since $L \in \mathcal{T}$, we have $\mathcal{T} \neq \emptyset$.   By a Zorn's Lemma argument there exists a maximal element in $\mathcal{T}$, which we denote $M$.   We claim that $M$ is a maximal left ideal of $C^*(E)$.  For suppose that $M'$ is a left ideal of $C^*(E)$ having $M \subsetneqq M'$.  Then by the maximality of $M$ in $\mathcal{T}$ we have $p_{v_1} \in M'$, so that $x p_{v_1} \in M'$ for each $x\in C^*(E)$, which gives that $x = (x - xp_{v_1}) + xp_{v_1}  \in L + M' = M'$ for each $x\in C^*(E)$, so that $M' = C^*(E)$.   Thus $M$ is a maximal left ideal, and since $M$ is also modular, $M$ is a maximal modular left ideal.  Thus by \cite[VII.2, Exercise~6]{Con} (cf.~\cite[Chapter~2, Corollary~2.1.4]{Ric}) $M$ is closed.

Since $M$ is closed we may form the regular $*$-representation of $C^*(E)$ into $C^*(E)/M$; i.e., the homomorphism $\pi : C^*(E) \to \operatorname{End} (C^*(E)/M)$ given by $\pi(a) (b+M) := ab + M$.  In this way $C^*(E)/M$ becomes a left $C^*(E)$-module.  The submodules of $C^*(E)/M$ correspond to left ideals of $C^*(E)$ containing $M$, and by the maximality of $M$ the only submodules of $C^*(E)/M$ are $\{ 0 \}$ and $M$.  Hence $C^*(E)/M$ is a simple module.  We claim that $\ker \pi = \{ 0 \}$.  If $a \in \ker \pi$, then for all $b \in C^*(E)$ we have $ab +M = \pi(a) (b+M) = 0+M$, so that $ab \in M$ for all $b \in C^*(E)$.  If $\{ e_\lambda \}_{\lambda \in \Lambda}$ is an approximate unit for $C^*(E)$, then the previous sentences combined with the fact that $M$ is closed implies that $a = \lim_{\lambda} a e_\lambda \in M$.  Hence we have established that $\ker \pi \subseteq M$.  Furthermore, since $\ker \pi$ is a closed two-sided ideal of $C^*(E)$, it follows from above that if $\ker \pi$ is nonzero, then $s_{\lambda_n} s_{\lambda_n}^* \in \ker \pi$ for some $n \in I$.  But then $M$ contains $a s_{\lambda_n}s_{\lambda_n}^*$ for every $a\in C^*(E)$.  In addition, since $a - as_{\lambda_n}s_{\lambda_n}^* \in L \subseteq M$ for all $a \in C^*(E)$, it follows that $a \in M$ for all $a \in C^*(E)$, and $C^*(E) \subseteq M$, which is a contradiction.  We conclude that $\ker \pi = \{0\}$.  Hence $C^*(E)/M$ is a faithful left $C^*(E)$-module, which with the simplicity of $C^*(E)/M$ yields that  $C^*(E)$ is primitive as a ring.  It follows (see Remark~\ref{primitive-ring-C*-same}) that $C^*(E)$ is primitive as a $C^*$-algebra.

\smallskip 

\noindent  \textsc{Proof of Necessity:}  If  $C^*(E)$ is primitive, then $C^*(E)$ is necessarily prime by Lemma~\ref{primitiveimpliesprime}, so by Proposition~\ref{primenessproposition} we get that $E$ satisfies Condition~(L) and  is downward directed.  In addition, Corollary~\ref{primitive-implies-CSP-cor} shows that if $C^*(E)$ is primitive, then $E$ satisfies the Countable Separation Property, thus completing the proof.
\end{proof}

\begin{corollary} \label{primitive-equiv-cor}
Let $E$ be a graph. Then the following are equivalent:
\begin{itemize}
\item[(i)] $C^*(E)$ is primitive.
\item[(ii)] $C^*(E)$ is prime and $E$ satisfies the Countable Separation Property.
\item[(iii)] $C^*(E)$ is prime and $C^*(E)$ has a faithful cyclic $*$-representation.
\item[(iv)] $C^*(E)$ is prime and $C^*(E)$ has a faithful countably generated $*$-representation.
\end{itemize}
\end{corollary}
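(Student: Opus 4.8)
The plan is to close the cycle of implications $(\mathrm{i}) \Rightarrow (\mathrm{iii}) \Rightarrow (\mathrm{iv}) \Rightarrow (\mathrm{ii}) \Rightarrow (\mathrm{i})$, since each arrow is either immediate from a definition or already proved. For $(\mathrm{i}) \Rightarrow (\mathrm{iii})$ I would first invoke Lemma~\ref{primitiveimpliesprime} to conclude that $C^*(E)$ is prime, and then observe that the faithful irreducible $*$-representation witnessing primitivity is in particular a faithful cyclic $*$-representation (every irreducible $*$-representation is cyclic, as recorded in the remark comparing the various types of $*$-representations). For $(\mathrm{iii}) \Rightarrow (\mathrm{iv})$ there is nothing to do beyond noting that a cyclic (equivalently, singly generated) $*$-representation is countably generated, while the primeness hypothesis carries over unchanged.

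For $(\mathrm{iv}) \Rightarrow (\mathrm{ii})$ I would apply the Proposition preceding this corollary (faithful countably generated $*$-representation $\Rightarrow$ Countable Separation Property), again transporting the primeness hypothesis along for the ride. Finally, $(\mathrm{ii}) \Rightarrow (\mathrm{i})$ is exactly the sufficiency direction of Theorem~\ref{primitivitytheorem}: by Proposition~\ref{primenessproposition}, primeness of $C^*(E)$ unpacks into Condition~(L) together with downward directedness, so in combination with the Countable Separation Property all three hypotheses of Theorem~\ref{primitivitytheorem} are in force and $C^*(E)$ is primitive.

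I expect no genuine obstacle here: the substantive work is already packaged in Theorem~\ref{primitivitytheorem} (in particular its sufficiency half) and in the necessary-condition Proposition for the Countable Separation Property, so this corollary amounts to bookkeeping. The only points requiring care are (a) checking that the four implications genuinely form a closed loop, so that all four statements become equivalent, and (b) making sure the hypothesis ``prime'' is carried verbatim through $(\mathrm{iii})$ and $(\mathrm{iv})$. As an alternative organization, one could record $(\mathrm{i}) \Leftrightarrow (\mathrm{ii})$ directly (this is precisely the reformulation stated inside Theorem~\ref{primitivitytheorem}) and then supply the short chain $(\mathrm{i}) \Rightarrow (\mathrm{iii}) \Rightarrow (\mathrm{iv}) \Rightarrow (\mathrm{ii})$; the two routes are interchangeable.
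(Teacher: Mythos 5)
Your proposal is correct and takes essentially the same route as the paper: the substantive content is Theorem~\ref{primitivitytheorem} (with Proposition~\ref{primenessproposition}) for the equivalence with (ii), the proposition on faithful countably generated $*$-representations for (iv) $\Rightarrow$ (ii), and trivial implications (i) $\Rightarrow$ (iii) $\Rightarrow$ (iv). Your ``alternative organization'' at the end is in fact exactly the paper's proof, so the only difference from your main write-up is the cosmetic choice of arranging the implications in a single cycle rather than proving (i) $\Leftrightarrow$ (ii) separately.
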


\begin{proof}
The equivalence of (i) and (ii) follows from Theorem~\ref{primitivitytheorem} and Proposition~\ref{primenessproposition}.  The implications (i) $\implies$ (iii) $\implies$ (iv) are trivial.  The implication (iv) $\implies$ (ii) follows from Corollary~\ref{primitive-implies-CSP-cor}.
\end{proof}

\begin{remark}
In \cite[Problem~13.6]{K3} Katsura asks whether a prime $C^*$-algebra is primitive if it has a faithful cyclic $*$-representation.   Corollary~\ref{primitive-equiv-cor} shows that the answer to Katsura's question is affirmative in the class of graph $C^*$-algebras.   Moreover, Corollary~\ref{primitive-equiv-cor} prompts us to ask the following more general question:

$ $

\noindent \textbf{Question:} If a $C^*$-algebra is prime and has a faithful countably generated representation, then is that $C^*$-algebra primitive?

$ $

\noindent Again, Corollary~\ref{primitive-equiv-cor} provides us with an affirmative answer to this question for the class of graph $C^*$-algebras.  In addition, an affirmative answer to this question in general implies an affirmative answer to Katsura's question in \cite[Problem~13.6]{K3}.
\end{remark}

\begin{remark}
In the introduction of \cite{W} Weaver notes that his example of a prime and not primitive $C^*$-algebra ``$\ldots$ places competing demands on the set of partial isometries: it must be sufficiently abundant $\ldots$ and [simultaneously] sufficiently sparse $\ldots$"    Effectively, Theorem~\ref{primitivitytheorem}   identifies  precisely how these two competing demands play out   in the context of a graph $C^*$-algebra $C^*(E)$:  primeness (abundance of partial isometries) corresponds to $E$ satisfying Condition~(L) and being downward directed, while nonprimitivity (sparseness  of partial isometries) corresponds to $E$ not satisfying the Countable Separation Property.  
\end{remark}

\begin{remark}
It is shown in \cite[Proposition 4.2]{BHRS} that for a graph $E$ having both $E^0$ and $E^1$ countable, $C^*(E)$ is primitive if and only if $E$ is downward directed and satisfies Condition~(L).   When $E^0$ is countable, then $E$ trivially satisfies the Countable Separation Property, with $E^0$ itself providing the requisite countable set of vertices.   Thus Theorem~\ref{primitivitytheorem} provides both a generalization of (since the countability of $E^1$ is not required), and an alternate proof for, the result in \cite[Proposition 4.2]{BHRS}.
\end{remark}

\section{Examples of prime but not primitive $C^*$-algebras}

We now offer a number of examples of prime nonprimitive $C^*$-algebras that arise from the characterizations presented in Proposition \ref{primenessproposition} and  Theorem~\ref{primitivitytheorem}.   These examples are similar in flavor to the classes of examples that played an important role in \cite{ABR}.

\begin{definition}\label{EsubTsubXdefinition}
Let $X$ be any nonempty set, and let $\mathcal{F}(X)$ denote the set  of  finite nonempty  subsets of $X$.   We  define three graphs $E_{A}(X)$, $E_{L}(X)$, and $E_{K}(X)$ as follows.

\medskip

\noindent (1) The graph $E_{A}(X)$ is defined by
$$E_{A}(X)^0 \ = \ \mathcal{F}(X) \ \  \text{ and } \ \ 
E_{A}(X)^1 = \{e_{A,A'} : A,A'\in \mathcal{F}(X) \ \mbox{and} \ A\subsetneqq A'\},$$
with $s(e_{A,A'})=A$ and $r(e_{A,A'})=A'$ for each $e_{A,A'} \in E_{A}(X)^1.$

\medskip

\noindent (2) The graph $E_{L}(X)$ is defined by
$$E_{L}(X)^0 \ = \ \mathcal{F}(X) \ \ \text{ and } \ \
E_{L}(X)^1 = \{e_{A,A'} : A,A'\in \mathcal{F}(X) \ \mbox{and} \ A\subseteqq A'\},$$
with $s(e_{A,A'})=A$ and $r(e_{A,A'})=A'$ for each $e_{A,A'} \in E_{L}(X)^1$.

\medskip

\noindent (3) The graph $E_{K}(X)$ is defined by
\begin{align*}
E_{K}(X)^0 &=  \mathcal{F}(X) \quad \text{ and } \\
E_{K}(X)^1 &= \{e_{A,A'} : A,A'\in \mathcal{F}(X) \ \mbox{and} \ A\subseteqq A'\} \cup \{f_A : A\in \mathcal{F}(X)\},
\end{align*}
with  $s(e_{A,A'})=A$, and $r(e_{A,A'})=A'$ for each $e_{A,A'} \in E_{K}(X)^1,$ and with $s(f_A) = r(f_A) = A$ for all $f_A$. 
\end{definition}

\medskip

\begin{remark}\label{remarkaboutthreegraphs}
Observe that for any nonempty set $X$, the graph $E_{A}(X)$ is acyclic, the graph $E_{L}(X)$ has as its only simple cycles the single loop at each vertex (so that $E_{L}(X)$ satisfies Condition (L), but not Condition (K)), and the graph $E_{K}(X)$ has as its only simple cycles the two loops at each  vertex (so that $E_{K}(X)$ satisfies Condition (K)).
\end{remark}

In this section all direct limits that we discuss will be direct limits in the category of $C^*$-algebras, so that the direct limit algebras discussed are $C^*$-algebras.  An AF-algebra is typically defined to be a $C^*$-algebra that is the direct limit of a sequence of finite-dimensional algebras.  Since it is a sequential direct limit, an AF-algebra is necessarily separable.  Some authors, such as Katsura in \cite{K}, have considered arbitrary direct limits of finite-dimensional $C^*$-algebras.  Following Katsura in \cite{K}, we shall define an \emph{AF-algebra} to be a $C^*$-algebra that is the direct limit of finite-dimensional $C^*$-algebras.  (Equivalently, an AF-algebra is a $C^*$-algebra $A$ with a directed family of finite dimensional $C^*$-subalgebras whose union is dense in $A$.)  An AF-algebra in our sense is a sequential direct limit (i.e., an AF-algebra in the traditional sense) if and only if it is separable.

\begin{lemma}\label{downdirandCSPpropertieslemma}
Let $E_A(X), E_L(X),$ and $E_K(X)$ be the graphs presented in Definition \ref{EsubTsubXdefinition}. 
\begin{enumerate}
\item Each of  the graphs $E_A(X), E_L(X),$ and $E_K(X)$ is downward directed.
\item Each of  the graphs $E_A(X), E_L(X),$ and $E_K(X)$  satisfies the Countable Separation Property if and only if $X$ is countable. 
\end{enumerate}
\end{lemma}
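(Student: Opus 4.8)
The plan is to verify the two assertions essentially by direct inspection of the combinatorial structure of the three graphs, all of which have vertex set $\mathcal{F}(X)$ (the finite nonempty subsets of $X$) and edges $e_{A,A'}$ running from $A$ to any $A'$ with $A \subseteq A'$ (resp.\ $A \subsetneqq A'$), together with the loops $f_A$ in the case of $E_K(X)$. The key observation, which I would record first, is that in each of the three graphs one has $A \geq A'$ whenever $A \subseteq A'$: indeed the single edge $e_{A,A'}$ realizes a path from $A$ to $A'$ (in the $E_A(X)$ case one needs $A \subsetneqq A'$, but when $A = A'$ one still has $A \geq A$ trivially since vertices are paths of length zero). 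So reachability in all three graphs contains the ``$\subseteq$'' relation on $\mathcal{F}(X)$.

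For part~(1), given $A, B \in \mathcal{F}(X)$, set $C := A \cup B$; since $A$ and $B$ are finite nonempty, so is $C$, hence $C \in \mathcal{F}(X)$, and $A \subseteq C$, $B \subseteq C$ give $A \geq C$ and $B \geq C$ by the observation above. Thus each graph is downward directed. (One should note that in $E_A(X)$ the same argument works: if $A = C$ then $A \geq C$ holds trivially, and otherwise $A \subsetneqq C$ supplies the edge.) This disposes of part~(1) for all three graphs simultaneously.

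For part~(2), first suppose $X$ is countable. Then $\mathcal{F}(X)$ is countable, so $E^0$ itself is a countable set $Y$ with $E^0 = \bigcup_{y \in Y} U(y)$ (since $y \in U(y)$ for every $y$), and the Countable Separation Property holds trivially. Conversely, suppose $X$ is uncountable; I want to show the Countable Separation Property fails. The crucial point is to describe $U(A)$ for a vertex $A$: since $A \geq A'$ forces, in each of these graphs, a path from $A$ to $A'$ built from edges of the form $e_{\,\cdot,\cdot}$ (the loops $f_A$ never change the vertex), and each such edge only enlarges the subset, one gets $A \geq A' \implies A \subseteq A'$; combined with the reverse implication above this yields
$$U(A) = \{ B \in \mathcal{F}(X) : B \geq A \} = \{ B \in \mathcal{F}(X) : B \subseteq A \},$$
which is a \emph{finite} set (it has at most $2^{|A|}-1$ elements). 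Now if $Z \subseteq \mathcal{F}(X)$ were a countable set with $E^0 = \bigcup_{A \in Z} U(A)$, then $\mathcal{F}(X) = \bigcup_{A \in Z} \{ B : B \subseteq A\}$ would be a countable union of finite sets, hence countable; but $\mathcal{F}(X) \supseteq \{\{x\} : x \in X\}$ is uncountable, a contradiction. Hence the Countable Separation Property fails when $X$ is uncountable, completing part~(2).

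The only step requiring a little care — and the main (mild) obstacle — is the computation $U(A) = \{B : B \subseteq A\}$, which hinges on verifying that any path in $E_A(X)$, $E_L(X)$, or $E_K(X)$ has source-subset contained in its range-subset; this is an immediate induction on path length using that every non-loop edge goes from a set to a strictly larger set and loops fix the vertex, but it is the one place where the three graphs must be treated with (trivially small) attention to the loop edges $f_A$ in $E_K(X)$.
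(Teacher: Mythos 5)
Your proof is correct and follows essentially the same route as the paper: part (1) via the vertex $A \cup B$, and part (2) via the countability of $\mathcal{F}(X)$ in one direction and, in the other, the observation that reachability in these graphs is just inclusion of finite sets, so countably many vertices can only ``see'' countably many elements of $X$. Your phrasing of the uncountable case through the finiteness of $U(A)$ is a minor cosmetic variant of the paper's argument (which instead picks $x \in X$ outside the union of a countable family of finite sets), and your explicit treatment of the $A=C$ case in $E_A(X)$ is a small point the paper glosses over.
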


\begin{proof}
To establish (1),  we observe that in each of the graphs $E_A(X), E_L(X),$ and $E_K(X)$,   each pair of vertices $A,A'$ corresponds to a pair of finite subsets of $X$, so that if $B$ denotes the finite set  $A \cup A'$ then there is an edge $e_{A,B}$ from $A$ to $B$ and an edge $e_{A',B}$ from $A'$ to $B$.   Downward directedness of each of the three graphs follows.   

For (2), we note that if $X$ is countable then $\mathcal{F}(X)$ is countable, so that in this case each of the three graphs has countably many vertices, and thus trivially satisfies the Countable Separation Property.  On the other hand, if $X$ is uncountable, then any countable union of elements of  $\mathcal{F}(X)$ includes only countably many elements of $X$, so that there exists some vertex (indeed, uncountably many vertices) which does not connect to the vertices represented by such a countable union.   
\end{proof}

\begin{proposition}\label{graphproperties}
Let $X$ be an infinite set, and let $E_A(X)$, $E_L(X)$, and $E_K(X)$ be the graphs presented in Definition~\ref{EsubTsubXdefinition}.  Then
\begin{enumerate}
\item    $C^*(E_{A}(X))$ is a prime AF-algebra for any set $X$.  Furthermore,   $C^*(E_{A}(X))$  is primitive if and only if $X$ is countable.  In addition, $C^*(E_{A}(X))$ is a separable AF-algebra if and only if $X$ is countable.
\item   $C^*(E_{L}(X))$ is a prime $C^*$-algebra that is not AF for any set $X$.  Furthermore,  $C^*(E_{L}(X))$  is primitive if and only if $X$ is countable.  In addition, $C^*(E_{L}(X))$ contains an ideal that is not gauge invariant.
\item  $C^*(E_K(X))$ is a prime $C^*$-algebra of real rank zero that is not AF for any set $X$.   Furthermore, $C^*(E_K(X))$  is primitive if and only if $X$ is countable.  In addition, every ideal of $C^*(E_{K}(X))$ is gauge invariant.
\end{enumerate}
\end{proposition}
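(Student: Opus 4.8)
The plan is to split the nine assertions into two groups. The first group --- that each of $C^*(E_A(X))$, $C^*(E_L(X))$, $C^*(E_K(X))$ is prime, and that each is primitive exactly when $X$ is countable --- follows uniformly from the characterizations already established. The second group consists of the remaining algebra-specific statements (AF versus non-AF, separability, gauge-invariance of ideals, real rank zero), each to be obtained from the relevant standard structure theory of graph $C^*$-algebras together with Definition~\ref{EsubTsubXdefinition}, Remark~\ref{remarkaboutthreegraphs}, and Lemma~\ref{downdirandCSPpropertieslemma}.

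For the first group, I would begin by checking Condition~(L). The graph $E_A(X)$ is acyclic, so Condition~(L) holds vacuously. In $E_L(X)$ the only cycles are powers of the loop $e_{A,A}$, and this loop has an exit: since $X$ is infinite and $A$ is finite there is an $x\in X\setminus A$, and $e_{A,A\cup\{x\}}$ then has source $A$ and differs from $e_{A,A}$. In $E_K(X)$ the two loops $e_{A,A}$ and $f_A$ at each vertex are exits for one another. Hence all three graphs satisfy Condition~(L); by Lemma~\ref{downdirandCSPpropertieslemma}(1) they are downward directed, so Proposition~\ref{primenessproposition} shows all three $C^*$-algebras are prime. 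Given primeness, Theorem~\ref{primitivitytheorem} (equivalently Corollary~\ref{primitive-equiv-cor}) shows each of these algebras is primitive if and only if the underlying graph satisfies the Countable Separation Property, which by Lemma~\ref{downdirandCSPpropertieslemma}(2) holds precisely when $X$ is countable.

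For the second group I would proceed item by item. For the AF assertion in (1): for each finite $F\subseteq X$ the linear span $A_F$ of $\{s_\alpha s_\beta^*:\alpha,\beta\in\path(E_A(X)),\ r(\alpha)=r(\beta),\ \alpha^0\cup\beta^0\subseteq\mathcal{F}(F)\}$ is a finite-dimensional $C^*$-subalgebra of $C^*(E_A(X))$ --- finite-dimensional because acyclicity makes the restriction of $E_A(X)$ to $\mathcal{F}(F)$ a finite graph with only finitely many paths, and closed under the operations by the usual multiplication rules for the $s_\alpha s_\beta^*$; the $A_F$ form a directed family whose union contains every generator $s_\alpha s_\beta^*$ of $C^*(E_A(X))=\clspan\{s_\alpha s_\beta^*:r(\alpha)=r(\beta)\}$, hence is dense, so $C^*(E_A(X))$ is an AF-algebra in the sense adopted here, for any $X$. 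For the separability assertion in (1): if $X$ is countable then $\mathcal{F}(X)$ and $E_A(X)^1$ are countable, so $C^*(E_A(X))$ is separable, whereas if $X$ is uncountable the projections $\{p_{\{x\}}:x\in X\}$ are uncountably many mutually orthogonal nonzero projections, so $C^*(E_A(X))$ is not separable; combined with the previous item, $C^*(E_A(X))$ is a separable AF-algebra if and only if $X$ is countable. For the non-AF assertions in (2) and (3): in $C^*(E_L(X))$, and likewise in $C^*(E_K(X))$, the partial isometry $s_{e_{A,A}}$ is a non-unitary isometry in the unital corner $p_AC^*(E)p_A$ --- one has $s_{e_{A,A}}^*s_{e_{A,A}}=p_A$, and $s_{e_{A,A}}s_{e_{A,A}}^*\neq p_A$ because for $x\in X\setminus A$ the projection $s_{e_{A,A\cup\{x\}}}s_{e_{A,A\cup\{x\}}}^*$ is a nonzero subprojection of $p_A$ orthogonal to $s_{e_{A,A}}s_{e_{A,A}}^*$ --- so that corner is not stably finite and therefore $C^*(E)$ is not AF. For the gauge-invariance assertions in (2) and (3): by the known characterization that every ideal of $C^*(E)$ is gauge-invariant if and only if $E$ satisfies Condition~(K) (cf.~\cite{BHRS}), and since by Remark~\ref{remarkaboutthreegraphs} the graph $E_L(X)$ fails Condition~(K) while $E_K(X)$ satisfies it, $C^*(E_L(X))$ has a non-gauge-invariant ideal while every ideal of $C^*(E_K(X))$ is gauge-invariant. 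Finally, the real-rank-zero assertion in (3) follows from the known characterization that $C^*(E)$ has real rank zero if and only if $E$ satisfies Condition~(K).

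All the mathematical content lies in the first group; the second group is essentially bookkeeping against the definitions. Two points warrant care. First, in the non-AF argument one needs that an AF-algebra in Katsura's generalized sense is stably finite and that any corner of it by a projection is again such an AF-algebra --- both follow by approximating inside the directed family of finite-dimensional subalgebras, since stable finiteness passes to directed unions and a projection can be conjugated into a member of the family for sufficiently large index. Second, the external structure theorems used for the gauge-invariance and real-rank-zero assertions are most often stated for countable (even row-finite) graphs, whereas $E_A(X)$, $E_L(X)$, $E_K(X)$ are in general uncountable with every vertex an infinite emitter; one must therefore appeal to versions valid for arbitrary graphs, or reduce to the countable case by the direct-limit techniques already used in Section~2. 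I expect this last point --- securing the structure theorems at the required level of generality --- to be the main obstacle.
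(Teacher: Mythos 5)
Your proposal is correct, and its skeleton is the same as the paper's: primeness and primitivity for all three graphs are read off from Proposition~\ref{primenessproposition}, Theorem~\ref{primitivitytheorem}, and Lemma~\ref{downdirandCSPpropertieslemma} after checking Condition~(L) (acyclicity for $E_A(X)$, the exit $e_{A,A\cup\{x\}}$ with $x\in X\setminus A$ for $E_L(X)$, the two loops for $E_K(X)$), and the gauge-invariance and real-rank-zero claims are obtained, as in the paper, from the Condition~(K) characterizations with the same caveat about extending countable-graph theorems to arbitrary graphs (the paper discharges this by noting the proofs go through verbatim, or by citing Katsura's topological-graph results and the approximation lemma of Raeburn--Szyma\'nski). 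Where you genuinely differ is in the AF/non-AF bookkeeping: the paper simply invokes the standard facts that an acyclic graph yields an AF-algebra and that a graph containing a cycle yields a non-AF algebra, whereas you prove these from scratch --- an explicit directed family of finite-dimensional subalgebras $A_F$ indexed by finite $F\subseteq X$ for $E_A(X)$, and the exhibition of $p_A$ as an infinite projection (via the proper isometry $s_{e_{A,A}}$ in the corner $p_AC^*(E)p_A$) for $E_L(X)$ and $E_K(X)$. This costs you the two auxiliary facts you flag (AF-algebras in the directed-family sense are finite, and the approximation of a projection into a member of the family), both standard and provable exactly as you sketch, but it buys arguments that are manifestly valid for uncountable graphs, which is a point the paper leaves implicit for the AF claims; note also that the corner detour is unnecessary, since an infinite projection already contradicts finiteness of an AF-algebra. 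The paper's separability argument is the same as yours (countable graph iff $X$ countable, uncountably many orthogonal vertex projections otherwise).
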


\begin{proof}

 The indicated primeness and primitivity properties of each of the three algebras $C^*(E_A(X))$, $C^*(E_L(X))$, and $C^*(E_K(X))$ follow directly from Proposition \ref{primenessproposition}, Theorem \ref{primitivitytheorem}, and Lemma \ref{downdirandCSPpropertieslemma}.   We now take up the discussion of the additional properties. 

(1)  Since $E_A(X)$ is a countable graph if and only if $X$ is countable, it follows that $C^*(E_A(X))$ is separable if and only if $X$ is countable.

(2)  We see that $E_L(X)$ has exactly one loop at each vertex, and that these are the only simple cycles in $E_L(X)$.  Thus every vertex of $E_L(X)$ is the base of exactly one simple cycle, so that  $E_L(X)$ does not satisfy Condition~(K).  In addition, if $\alpha$ is the loop based at the vertex $A$, then since $A$ is finite and $X$ is infinite there exists an element $x \in X \setminus A$, and the edge from $A$ to $A \cup \{ x \}$ provides an exit for $\alpha$.  Hence every cycle in $E_L(X)$ has an exit and $E_L(X)$ satisfies Condition~(L).  Since $E_L(X)$ contains a cycle, $C^*(E_L(X))$ is not AF.   Moreover, since $E_L(X)$ does not satisfy Condition~(K), it follows that $C^*(E_L(X))$ contains an ideal that is not gauge invariant.  (This is established  for row-finite countable graphs in \cite[Theorem~2.1.19]{Tom9}, although the same argument works for non-row-finite or uncountable graphs.  Alternatively, the result for uncountable graphs may also be obtained as a special case of \cite[Theorem~7.6]{K3}.)

(3)     As $E_K(X)$ has two loops at each vertex,  every vertex in $E_K(X)$ is the base point of two distinct simple cycles, so that $E_K(X)$ satisfies Condition~(K).   Since $E_K(X)$ contains a cycle, $C^*(E_K(X))$ is not AF.   In addition, since $E_K(X)$ satisfies Condition~(K), $C^*(E_K(X))$ has real rank zero.  (This was established  for $C^*$-algebras of locally-finite countable graphs in \cite[Theorem~4.1]{JP01} and for $C^*$-algebras of countable graphs in \cite[Theorem~2.5]{HS}, and can be extended to uncountable graphs using the approximation methods of \cite[Lemma~1.2]{RS}.)   Moreover, since $E_K(X)$ satisfies Condition~(K), all ideals of $C^*(E_K(X))$ are gauge invariant. 
\end{proof}

We can now produce infinite classes of $C^*$-algebras that are prime and not primitive.  In fact, we are able to produce three such classes of $C^*$-algebras: one in which all the $C^*$-algebras are AF-algebras, one in which all the $C^*$-algebras are non-AF and contain ideals that are not gauge invariant, and one in which all the $C^*$-algebras are non-AF, have all of their ideals gauge invariant, and are real rank zero.

\begin{proposition}\label{examplesofprimenotprimitive}    
For a set $X$ we let $|X|$ denote the cardinality of $X$.
\begin{enumerate}
\item[(1)]  If $\mathcal{C} := \{ X_i : i \in I \}$ is a collection of sets with $|X_i| > \aleph_0$ for all $i \in I$, and with $|X_i| \neq |X_j|$ for all $i, j \in I$ with $i \neq j$, then
$$\{ C^*(E_A(X_i)) : i \in I \}$$ is a collection of AF-algebras that are prime and not primitive.  Moreover $C^*(E_A(X_i)) $ is not Morita equivalent to $C^*(E_A(X_j))$ for all $i,j \in I$ with $i \neq j$.

\item[(2)]  If $\mathcal{C} := \{ X_i : i \in I \}$ is a collection of sets with $|X_i| \geq 2^{\aleph_0}$ for all $i \in I$, and with $|X_i| \neq |X_j|$ for all $i, j \in I$ with $i \neq j$, then
$$\{ C^*(E_L(X_i)) : i \in I \}$$ is a collection of non-AF $C^*$-algebras each of which is prime and not primitive, and each of which has the property that it contains ideals that are not gauge invariant.  Moreover, $C^*(E_L(X_i))$ is not Morita equivalent to $C^*(E_L(X_j))$ for all $i,j \in I$ with $i \neq j$.

\item[(3)]  If $\mathcal{C} := \{ X_i : i \in I \}$ is a collection of sets with $|X_i| > \aleph_0$ for all $i \in I$, and with $|X_i| \neq |X_j|$ for all $i, j \in I$ with $i \neq j$, then
$$\{ C^*(E_K(X_i)) : i \in I \}$$ is a collection of non-AF $C^*$-algebras of real rank zero each of which is prime and not primitive, and each of which has the property that all of its ideals are gauge invariant.  Moreover, $C^*(E_K(X_i))$ is not Morita equivalent to $C^*(E_K(X_j))$ for all $i,j \in I$ with $i \neq j$.
\end{enumerate}
\end{proposition}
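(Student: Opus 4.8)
The plan is to split the statement into two parts: (a) the assertions that each $C^*(E_\bullet(X_i))$ is prime, is not primitive, and has the additional structural features listed in the corresponding part of Proposition~\ref{graphproperties} (being AF or not, being real rank zero, gauge-invariance or not of all its ideals), and (b) the genuinely new assertion that $C^*(E_\bullet(X_i))$ and $C^*(E_\bullet(X_j))$ are not Morita equivalent whenever $i\neq j$. Part (a) needs no new work: in each of the three items the hypothesis on $|X_i|$ forces $X_i$ to be uncountable, so Lemma~\ref{downdirandCSPpropertieslemma} together with Proposition~\ref{graphproperties} already tells us that each of the three $C^*$-algebras is prime but not primitive and has exactly the stated properties, and I would simply quote those results.

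For part (b) I would use $K$-theory, which is invariant under (strong) Morita equivalence of $C^*$-algebras: an imprimitivity bimodule induces a $KK$-equivalence, and $\sigma$-unitality --- which fails for these nonunital, non-$\sigma$-unital algebras --- is not needed for this. The crucial graph-theoretic observation is that in each of $E_A(X)$, $E_L(X)$, $E_K(X)$ every vertex is an infinite emitter, because for $A\in\mathcal{F}(X)$ there are infinitely many finite sets $A'\subseteq X$ with $A\subseteq A'$ (since $X$ is infinite), so $|s^{-1}(A)|=\infty$. Writing $E$ for any one of these three graphs, we therefore have $E^0_{\reg}=\emptyset$, and the exact sequence computing the $K$-theory of $C^*(E)$ from its vertex matrix --- a special case of the Pimsner exact sequence for Cuntz--Pimsner algebras, hence valid for an arbitrary graph --- reads
$$0 \longrightarrow K_1(C^*(E)) \longrightarrow \bigoplus_{v\in E^0_{\reg}}\Z \longrightarrow \bigoplus_{v\in E^0}\Z \longrightarrow K_0(C^*(E)) \longrightarrow 0 .$$
Since $\bigoplus_{v\in E^0_{\reg}}\Z=\{0\}$, this gives $K_1(C^*(E))=0$ and $K_0(C^*(E))\cong\bigoplus_{v\in\mathcal{F}(X)}\Z$.

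It then remains to count. As $X$ is infinite, $|\mathcal{F}(X)|=|X|$, and for any infinite cardinal $\kappa$ the group $\bigoplus_\kappa\Z$ has cardinality $\kappa$; hence $|K_0(C^*(E_\bullet(X_i)))|=|X_i|$. Since the cardinals $|X_i|$ are pairwise distinct, the groups $K_0(C^*(E_\bullet(X_i)))$ are pairwise non-isomorphic, and therefore no two of the $C^*$-algebras $C^*(E_\bullet(X_i))$ are Morita equivalent. Specializing $E_\bullet$ to $E_A$, $E_L$, and $E_K$ in turn yields the ``Moreover'' clauses of (1), (2), and (3) respectively.

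The step I expect to be the main obstacle is justifying this $K$-theory computation when the graph is uncountable, since the standard references establish it only for countable graphs. I would handle this either through the Cuntz--Pimsner description of $C^*(E)$ --- where countability plays no role in the Pimsner exact sequence --- or by realizing $C^*(E)$ as a direct limit of $C^*$-algebras of countable subgraphs and invoking continuity of $K$-theory, the same direct-limit technique the authors already use to transfer the Cuntz--Krieger Uniqueness Theorem (Theorem~\ref{CKUT-thm}) to the uncountable setting.
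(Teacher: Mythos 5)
Your reduction of everything except the Morita-inequivalence claim to Proposition~\ref{graphproperties} is exactly what the paper does, and your graph-theoretic observation (every vertex of $E_A(X)$, $E_L(X)$, $E_K(X)$ is an infinite emitter, so $E^0_{\reg}=\emptyset$) is correct, as is the cardinality bookkeeping $|\mathcal{F}(X)|=|X|$ and $\bigl|\bigoplus_{\kappa}\Z\bigr|=\kappa$. The gap is in the two $K$-theoretic ingredients, both of which you justify by appealing to machinery that is not available for these algebras. These graph $C^*$-algebras are nonseparable and not even $\sigma$-unital (they contain uncountably many mutually orthogonal projections $p_v$), and the coefficient algebra $c_0(E^0)$ of the associated correspondence is nonseparable. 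The Pimsner six-term sequence, and the statement that an imprimitivity bimodule induces a $KK$-equivalence, are established in the literature under separability/$\sigma$-unitality hypotheses (countably generated correspondences and bimodules); Kasparov theory itself is not well behaved without such hypotheses, so the assertion that ``$\sigma$-unitality is not needed'' is precisely the point that cannot simply be asserted. Your fallback for the computation of $K_*(C^*(E))$ --- writing $C^*(E)$ as a direct limit over countable subgraphs (here one can take the full subgraphs on $\mathcal{F}(Y)$ for countable infinite $Y\subseteq X$, where every vertex stays singular, so the countable-graph formula gives $K_0$ free on the vertices and $K_1=0$) and using continuity of $K$-theory --- does work and is in the spirit of the paper's own approximation arguments. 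But even after that repair you still need Morita invariance of $K$-theory for non-$\sigma$-unital $C^*$-algebras, which does not follow from Brown--Green--Rieffel or from the $KK$ picture and would itself require a separable-subalgebra/direct-limit argument (cutting down the linking algebra to suitable separable subalgebras and applying Brown's full-corner theorem there). As written, that step is unproved.

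For comparison, the paper sidesteps $K$-theory entirely and uses an invariant that is Morita-invariant with no countability hypotheses whatsoever: the ideal lattice, via the Rieffel correspondence. It identifies the maximal ideals of $C^*(E_A(X))$ and $C^*(E_K(X))$ with the points of $X$ (through admissible pairs $(H_x,S)$ with $H_x=E^0\setminus\{x\}$, using Condition~(K)), and the maximal ideals of $C^*(E_L(X))$ with $X\times\T$ (each quotient by $I_{H_x}$ being $C(\T)$), which is why the hypothesis $|X_i|\ge 2^{\aleph_0}$ appears in part~(2); pairwise distinct cardinalities of the sets of maximal ideals then rule out Morita equivalence. If you want to keep your $K$-theoretic route, you must either supply the general Morita-invariance argument sketched above or replace $K_0$ by the ideal-lattice invariant; otherwise the ``Moreover'' clauses are not established.
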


\begin{proof}
For (1), the fact that the $C^*(E_A(X_i))$ are AF-algebras that are prime and not primitive follows from Proposition~\ref{graphproperties}(1).  It remains to show the $C^*(E_A(X_i))$ are mutually non-Morita equivalent.  For each $i \in I$, the graph $E_A(X_i)$ satisfies Condition~(K), and hence the ideals in $C^*(E_A(X_i))$ are in one-to-one correspondence with admissible pairs $(H,S)$, where $H$ is a saturated hereditary subset of $E_A(X_i)^0$ and $S$ is a subset of breaking vertices for $H$.  For each $x \in X_i$ define $H_x := E_A(X_i)^0 \setminus \{ x \}$.  Because $\{ x \}$ is a source and an infinite emitter in $E_A(X_i)$ that only emits edges into $H_x$, the set $H_x$ is saturated and hereditary, and $H_x$ has no breaking vertices.  In addition, any saturated hereditary subset of $E_A(X_i)^0$ that contains $H_x$ is either equal to $H_x$ or equal to $E_A(X_i)^0$.  Thus $I_{(H_x,\emptyset)}$ is a maximal ideal in $C^*(E_A(X_i))$.  Conversely, any maximal ideal must have the form $I_{(H,S)}$ for an admissible pair $(H,S)$, and in order to be a proper ideal there exists $x \in E_A(X_i)^0 \setminus H$.  Because $H \subseteq H_x$, the maximality of $I_{(H,S)}$ implies $H= H_x$ and $S = \emptyset$.  Thus we may conclude that the map $x \mapsto I_{(H_x, \emptyset)}$ is a bijection from $X$ onto the set of maximal ideals of $C^*(E_A(X_i))$.  Hence the cardinality of the set of maximal ideals of $C^*(E_A(X_i))$ is equal to $| X_i|$.  Since any two Morita equivalent $C^*$-algebras have isomorphic lattices of ideals, any two Morita equivalent $C^*$-algebras have sets of maximal ideals with the same cardinality.  Thus, when $i,j \in I$ with $i \neq j$, the fact that $|X_i| \neq |X_j|$ implies that $C^*(E_A(X_i)) $ is not Morita equivalent to $C^*(E_A(X_j))$.  

For (2), the fact that the $C^*$-algebras $C^*(E_L(X_i))$ are non-AF $C^*$-algebras that are prime and not primitive and that each contains ideals that are not gauge invariant follows from Proposition~\ref{graphproperties}(2).  It remains to show the $C^*(E_L(X_i))$ are mutually non-Morita equivalent.  Fix $i \in I$, and let $J \triangleleft C^*(E)$ be a maximal ideal in $C^*(E_L(X_i))$.  Then there exists exactly one $x \in X_i$ such that $I_{H_x} \subseteq J$, where $H_x := E_L(X_i)^0 \setminus \{ x \}$.  (If there did not exist such an $x$, then $J$ would be all of $C^*(E_L(X_i))$, and if there existed more than one $x$, then $J$ would not be maximal.)  Since $I_{H_x} \subseteq J$, it follows that $J$ corresponds to a maximal ideal of the quotient $C^*(E_L(X_j)) / I_{H_x}$.  Because the graph $E_L(X_i) \setminus H_x$ is a single vertex with a single loop, we see that $C^*(E_L(X_j)) / I_{H_x} \cong C(\T)$.  For this maximal ideal there is a unique $z \in \mathcal{T}$ such that the ideal is equal to $\{ f \in C(\T) : f(z) = 0 \}$.  This line of reasoning shows that the map $J \mapsto (x,z)$ is a bijection from the set of maximal ideals of $C^*(E_L(X_i))$ onto the set $X \times \T$.  Since $| X_i | \geq 2^{\aleph_0}$ and $| \T | = 2^{\aleph_0}$, we may conclude that $|X_i \times \T| = |X_i|$.  Thus the set of maximal ideals of $C^*(E_L(X_i))$ has cardinality equal to $| X_i|$.  Since any two Morita equivalent $C^*$-algebras have isomorphic lattices of ideals, any two Morita equivalent $C^*$-algebras have sets of maximal ideals with the same cardinality.  Thus, when $i,j \in I$ with $i \neq j$, the fact that $|X_i| \neq |X_j|$ implies that $C^*(E_L(X_i)) $ is not Morita equivalent to $C^*(E_L(X_j))$.

For (3), the fact that the $C^*(E_K(X_i))$ are non-AF $C^*$-algebras of real rank zero that are prime and not primitive and that  all ideals are gauge invariant follows from Proposition~\ref{graphproperties}(3).   It remains to show the $C^*(E_K(X_i))$ are mutually non-Morita equivalent.  The proof follows much like the proof of part (1): For each $i \in I$, the graph $E_K(X_i)$ satisfies Condition~(K), and hence the ideals in $C^*(E_K(X_i))$ are in one-to-one correspondence with admissible pairs $(H,S)$, where $H$ is a saturated hereditary subset $E_K(X_i)^0$ and $S$ is a subset of breaking vertices for $H$. For each $x \in X_i$ define $H_x := E_K(X_i)^0 \setminus \{ x \}$.  Because $\{ x \}$ is a source and an infinite emitter in $E_A(X_i)$, the set $H_x$ is saturated and hereditary.  In addition, because there is a loop at $\{ x \}$, it is a (unique) breaking vertex for $H_x$.  In addition, any saturated hereditary subset of $E_K(X_i)^0$ that contains $H_x$ is either equal to $H_x$ or equal to $E_A(X_i)^0$.  Thus $I_{(H_x,\{ x \})}$ is a maximal ideal in $C^*(E_K(X_i))$.  Conversely, any maximal ideal must have the form $I_{(H,S)}$ for an admissible pair $(H,S)$, and in order to be a proper ideal there exists $x \in E_K(X_i)^0 \setminus H$.  Because $H \subseteq H_x$, the maximality of $I_{(H,S)}$ implies $H= H_x$ and $S = \{ x \}$.  Thus we may conclude that the map $x \mapsto I_{(H_x, \{ x \})}$ is a bijection from $X$ onto the set of maximal ideals of $C^*(E_K(X_i))$.  Thus the cardinality of the set of maximal ideals of $C^*(E_K(X_i))$ is equal to $| X_i|$, and as argued in (1) this implies that when $i,j \in I$ with $i \neq j$, the $C^*$-algebra $C^*(E_K(X_i)) $ is not Morita equivalent to $C^*$-algebra $C^*(E_K(X_j))$.
\end{proof}

\begin{remark}
Note that in each of parts (1)--(3) of Proposition~\ref{examplesofprimenotprimitive} we are constructing a prime, nonprimitive $C^*$-algebra for each set in the collection $\mathcal{C}$.  We mention that for any cardinal number $\kappa$, there exists a collection of $\kappa$ sets of differing cardinalities all greater than or equal to $2^{\aleph_0}$.   (This fact is  well known; see e.g. \cite[Lemma 7.7]{Jech}.)   Hence in each of parts (1)--(3) of Proposition~\ref{examplesofprimenotprimitive} one can choose the collection $\mathcal{C}$ to be of any desired cardinality $\kappa$.  \end{remark}

\begin{example}\label{uncountableseparableexamples}
There are, of course, many examples of uncountable graphs whose associated $C^*$-algebras are primitive.  For instance, let $X$ be any uncountable set,  let $\mathcal{P}(X)$ denote the set of {\it all} subsets of $X$, and let $E_\mathcal{P}(X)$ be the graph having 
$$E_{\mathcal{P}(X)}^0 =  \mathcal{P}(X) \quad \text{ and } \quad E_{\mathcal{P}(X)}^1 = \{e_{A,A'} \ | \ A,A'\in {\mathcal{P}(X)} \text{ and }  A\subsetneqq A'\},$$
with $s(e_{A,A'})=A$, and $r(e_{A,A'})=A'$ for each $e_{A,A'} \in E_{\mathcal{P}(X)}^1.$
Then $E_{\mathcal{P}(X)}$ is not a countable graph, and $C^*(E_{\mathcal{P}(X)})$ is not a separable $C^*$-algebra.   However, $E_{\mathcal{P}(X)}$ satisfies the three conditions of Theorem~\ref{primitivitytheorem}, and hence $C^*(E_{\mathcal{P}(X)})$ is a primitive $C^*$-algebra.  (In particular, we observe that any vertex in $E_{\mathcal{P}(X)}$ emits an edge pointing to $\{X\} \in E_{\mathcal{P}(X)}^0$, so $E_{\mathcal{P}(X)}$ trivially satisfies the Countable Separation Property.)  The graph $E_{\mathcal{P}(X)}$ has no cycles, so that $C^*(E_{\mathcal{P}(X)})$ is an AF-algebra.  In a like manner, we could construct additional examples of uncountable graphs having primitive graph $C^*$-algebras, and one could easily produce non-AF examples by adding one or two loops at every vertex of $E_{\mathcal{P}(X)}$.
\end{example}

The following definition provides  a second graph-theoretic construction which produces examples of graphs whose corresponding graph $C^*$-algebras are prime but not primitive.

\begin{definition}\label{Esubkappadefinition}
{\rm Let $\kappa > 0$ be any ordinal.  We  define the graph $E_{\kappa}$ as follows:
$$E_{\kappa}^0 \ = \{\alpha \ | \ \alpha < \kappa\}, \ \ \ E_{\kappa}^1 = \{e_{\alpha,\beta} \ | \ \alpha, \beta < \kappa, \ \mbox{and} \ \alpha < \beta\},$$
  $s(e_{\alpha,\beta})=\alpha$, and $r(e_{\alpha,\beta})=\beta$ for each $e_{\alpha,\beta} \in E_{\kappa}^1.$
 \hfill  $\Box$  }
\end{definition}

Recall that an ordinal $\kappa$ is said to have {\it countable cofinality} in case $\kappa$ is the limit of a countable sequence of ordinals strictly less than $\kappa$.   For example, any countable ordinal has countable cofinality.  The ordinal $\omega_1$ does not have countable cofinality, while the ordinal $\omega_\omega$ does have this property.   With this definition in mind, it is clear that  $E_{\kappa}^0$ has the Countable Separation Property if and only if $\kappa$ has countable cofinality.
Thus by Theorem~\ref{primitivitytheorem} we get 

\begin{proposition}\label{kappasubalphaexamples}
 Let $\{\kappa_\alpha \ | \ \alpha \in \mathcal{A}\}$ denote a set of distinct ordinals, each without  countable cofinality.  Then the collection $\{C^*(E_{\kappa_\alpha}) \ | \ \alpha \in \mathcal{A} \}$ is  a set  of nonisomorphic AF-algebras, each of which is prime but not primitive.  
\end{proposition}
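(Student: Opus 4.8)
The plan is to apply Theorem~\ref{primitivitytheorem} to each $E_{\kappa_\alpha}$ (two of its three hypotheses being essentially automatic), to get AF-ness from acyclicity, and then to separate the algebras by an ordinal invariant of their ideal lattices. Fix $\kappa=\kappa_\alpha$. Every edge of $E_\kappa$ runs from a strictly smaller ordinal to a strictly larger one, so $E_\kappa$ is acyclic; it therefore vacuously satisfies Condition~(L), and since a graph $C^*$-algebra is AF precisely when its graph has no cycles (as used earlier in this section) $C^*(E_\kappa)$ is an AF-algebra. Given $u,v<\kappa$, the vertex $w:=\max\{u,v\}<\kappa$ satisfies $u\ge w$ and $v\ge w$ (a single edge, resp.\ the trivial path, joins each of $u,v$ to $w$), so $E_\kappa$ is downward directed and $C^*(E_\kappa)$ is prime by Proposition~\ref{primenessproposition}. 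Finally $U(w)=\{v<\kappa:v\le w\}$, so a set $X\subseteq E_\kappa^0$ with $E_\kappa^0=\bigcup_{x\in X}U(x)$ is exactly a cofinal subset of $\kappa$; since $\kappa$ has uncountable cofinality no countable such $X$ exists, $E_\kappa$ fails the Countable Separation Property, and $C^*(E_\kappa)$ is not primitive by Theorem~\ref{primitivitytheorem}.

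For the pairwise non-isomorphism, recall that the lattice of closed two-sided ideals is an isomorphism invariant (indeed a Morita invariant); it suffices to show that the maximum order type of a strictly decreasing transfinite chain of closed ideals of $C^*(E_\kappa)$ equals $\kappa+1$, for then $\kappa_\alpha+1=\kappa_\beta+1$ forces $\kappa_\alpha=\kappa_\beta$. Being acyclic, $E_\kappa$ trivially satisfies Condition~(K), so every ideal of $C^*(E_\kappa)$ is gauge-invariant, hence equal to $I_{(H,S)}$ for a saturated hereditary subset $H$ and a set $S$ of breaking vertices of $H$. As $\kappa$ is a limit ordinal, $E_\kappa$ has no regular vertices (every vertex emits infinitely many edges), so ``saturated'' is automatic and a hereditary subset is just an upward-closed subset of $\kappa$; by well-ordering these are $\emptyset$ together with the final segments $[\gamma,\kappa)$, $\gamma<\kappa$, which under inclusion form a chain anti-isomorphic to $\kappa+1$. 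Moreover each $B_{[\gamma,\kappa)}$ is finite: a breaking vertex $v$ of $[\gamma,\kappa)$ emits only finitely many edges into $[0,\gamma)$, forcing $(v,\gamma)$ to be finite, i.e.\ $\gamma=v+n$ with $1\le n<\omega$, of which there are only finitely many possibilities for $v$. The chain $\bigl(I_{[\gamma,\kappa)}\bigr)_{0\le\gamma\le\kappa}$ (with $I_{[\kappa,\kappa)}:=\{0\}$) is strictly decreasing of order type $\kappa+1$, giving the lower bound. For the upper bound, let $(J_\xi)_{\xi<\mu}$ be any strictly decreasing chain with $J_\xi=I_{([\gamma_\xi,\kappa),S_\xi)}$; then $(\gamma_\xi)_\xi$ is non-decreasing, and for each fixed value $g$ the block $\{\xi:\gamma_\xi=g\}$ has size at most $|B_{[g,\kappa)}|+1$, since on it the sets $S_\xi$ form a strictly decreasing family of subsets of the finite set $B_{[g,\kappa)}$. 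Hence $\mu=\sum_{g\le\kappa}|\{\xi:\gamma_\xi=g\}|$ is an ordinal sum of natural numbers over the well-ordered index set $[0,\kappa]$, and a short ordinal computation — using $\operatorname{cf}(\kappa)>\omega$ to see that $\omega\cdot\delta<\kappa$ for every $\delta<\kappa$, whence $\sum_{g<\kappa}\omega=\omega\cdot\kappa\le\kappa$ — gives $\mu\le\kappa+1$.

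The main obstacle is this non-isomorphism step: one must know that Condition~(K) makes the gauge-invariant ideals account for all ideals, that the breaking-vertex data can only lengthen a chain of ideals by bounded finite ``bumps'', and that uncountable cofinality is exactly what prevents these bumps, summed over $\kappa$-many final segments, from producing a decreasing chain of ideals of order type $\kappa+2$. The remaining ingredients — acyclicity $\Rightarrow$ AF, downward directedness, and the reformulation of the Countable Separation Property in terms of cofinality — are routine and have already been prepared by the discussion immediately preceding the proposition.
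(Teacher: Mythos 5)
Your proposal is correct. For the ``prime but not primitive AF-algebra'' part you follow exactly the route the paper takes (the paper in fact gives no separate proof, just the sentence ``Thus by Theorem~\ref{primitivitytheorem} we get'' after the remark on cofinality): acyclicity gives Condition~(L) vacuously and AF-ness, taking $w=\max\{u,v\}$ gives downward directedness, and since $U(w)=\{v : v\le w\}$ a set $X$ witnessing the Countable Separation Property is exactly a countable cofinal subset of $\kappa$, which cannot exist when $\operatorname{cf}(\kappa)>\omega$; Proposition~\ref{primenessproposition} and Theorem~\ref{primitivitytheorem} then finish. Where you genuinely go beyond the paper is the non-isomorphism claim, which the paper asserts with no argument at all: you distinguish the algebras by the supremum of order types of strictly decreasing chains of closed ideals, computing it to be $\kappa+1$ via Condition~(K), the classification of ideals by admissible pairs $(H,S)$, the fact that (having no regular vertices) the saturated hereditary sets are exactly the final segments $[\gamma,\kappa)$, the finiteness of each $B_{[\gamma,\kappa)}$, and the ordinal identity $\omega\cdot\kappa=\kappa$ for $\kappa$ of uncountable cofinality — all of which checks out. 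This added argument is real value, and it is needed, since distinct ordinals can share a cardinality (e.g.\ $\omega_1$ and $\omega_1\cdot 2$), so no crude size count of vertices or ideals would separate the algebras. Two harmless quibbles: with the usual convention a breaking vertex must emit a \emph{positive} finite number of edges into $E^0\setminus H$, so the case $\gamma=v+1$ should be excluded (this only shrinks $B_{[\gamma,\kappa)}$ and changes nothing); and your claim that every vertex is an infinite emitter uses that $\kappa$ is a limit ordinal, which is indeed what the hypothesis ``without countable cofinality'' is intended to guarantee.
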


\section{A comparison of the conditions for a graph $C^*$-algebra to be simple, to be primitive, and to be prime}

In Proposition~\ref{primenessproposition}  and Theorem~\ref{primitivitytheorem}, we obtained conditions for a graph $C^*$-algebra to be prime and primitive, respectively.  In this section we compare these conditions with the conditions for a graph $C^*$-algebra to be simple.   Recall that an {\it infinite path} $p$ {\it in}  $E$ is a nonterminating sequence $p = e_1e_2e_3 \dots$ of edges in $E$, for which $r(e_i) = s(e_{i+1})$ for all $i\geq 1$.   (Note that this notation, while standard, can be misleading; an infinite path in $E$ is not an element of ${\rm Path}(E)$, as the elements of ${\rm Path}(E)$ are, by definition,  finite sequences of edges in $E$.)  We denote the set of infinite paths in $E$ by $E^\infty$. 

\begin{proposition}\label{threesetsofconditions}
Let $E$ be a graph.

\noindent The graph $C^*$-algebra $C^*(E)$ is \textbf{simple} if and only if the following two conditions are satisfied
\begin{itemize}
\item[(1)] $E$ satisfies Condition~(L), and
\item[(2)] $E$ is cofinal (i.e., if $v \in E^0$ and $\alpha \in E^\infty \cup E^0_\textnormal{sing}$, then $v \geq \alpha^0$).
\end{itemize}

\noindent The graph $C^*$-algebra $C^*(E)$ is \textbf{primitive} if and only if the following three conditions are satisfied
\begin{itemize}
\item[(1)] $E$ satisfies Condition~(L),
\item[(2)] $E$ is downward directed (i.e., for all $v,w \in E^0$ there exists $x \in E^0$ such that $v \geq x$ and $w \geq x$), and 
\item[(3)] $E$ satisfies the Countable Separation Property (i.e., there exists a countable set $X \subseteq E^0$ such that $E^0 \geq X$).
\end{itemize}

\noindent The graph $C^*$-algebra $C^*(E)$ is \textbf{prime} if and only if the following two conditions are satisfied
\begin{itemize}
\item[(1)] $E$ satisfies Condition~(L), and
\item[(2)] $E$ is downward directed (i.e., for all $v,w \in E^0$ there exists $x \in E^0$ such that $v \geq x$ and $w \geq x$). 
\end{itemize}
\end{proposition}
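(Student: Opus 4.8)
The plan is to note first that two of the three stated equivalences require nothing new: the criterion for primitivity is exactly Theorem~\ref{primitivitytheorem}, and the criterion for primeness is exactly Proposition~\ref{primenessproposition}. So the only assertion needing an argument is the simplicity criterion, and that is where I would concentrate. (One could instead cite the well-known simplicity criterion for graph $C^*$-algebras, but a self-contained proof is short given the machinery already developed above, so I would sketch one.)

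For \emph{sufficiency} of the simplicity criterion, I would assume $E$ satisfies Condition~(L) and is cofinal, and let $I$ be a nonzero closed two-sided ideal of $C^*(E)$. Applying the Cuntz-Krieger Uniqueness Theorem (Theorem~\ref{CKUT-thm}) to the quotient map $C^*(E) \to C^*(E)/I$ yields a vertex $v$ with $p_v \in I$. By Lemma~\ref{reachability-implies-in-I-lem} we get $p_w \in I$ for every $w \in H(v)$, and a short induction along the stages $H(v) = H_0 \subseteq H_1 \subseteq \cdots$ of the saturation of $H(v)$ (using $p_w = \sum_{s(e)=w} s_e p_{r(e)} s_e^*$ for $w \in E^0_\reg$, a finite sum) extends this to $p_w \in I$ for every $w \in \overline{H(v)}$. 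I would then show cofinality forces $\overline{H(v)} = E^0$: if some $w_0 \notin \overline{H(v)}$, then, since $\overline{H(v)}$ is saturated, either $w_0 \in E^0_{\textnormal{sing}}$, or $w_0 \in E^0_\reg$ and $w_0$ emits an edge to some vertex outside $\overline{H(v)}$; iterating yields either a singular vertex or an infinite path lying entirely outside $\overline{H(v)}$, and cofinality applied at $v$ then places some vertex of $E^0 \setminus \overline{H(v)}$ into $H(v) \subseteq \overline{H(v)}$, a contradiction. Hence $p_w \in I$ for all $w \in E^0$, so $s_e = p_{s(e)} s_e \in I$ for all $e \in E^1$, giving $I = C^*(E)$; thus $C^*(E)$ is simple.

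For \emph{necessity}, I would suppose $C^*(E)$ is simple. A simple $C^*$-algebra is prime (if $IJ = \{0\}$ with $I, J$ nonzero, then $I = J = C^*(E)$, so $C^*(E) = \overline{C^*(E)\,C^*(E)} = IJ = \{0\}$, impossible), so Proposition~\ref{primenessproposition} immediately gives Condition~(L). For cofinality, suppose it fails: pick $v \in E^0$ and $\alpha \in E^\infty \cup E^0_{\textnormal{sing}}$ with $v \not\geq \alpha^0$. By induction on the stages $H_0 \subseteq H_1 \subseteq \cdots$ of the saturation of $H(v)$ I would check that no vertex of $\alpha^0$ ever enters $\overline{H(v)}$: in the infinite-path case each vertex $w_i$ of $\alpha$ emits an edge to $w_{i+1}$, so $w_i$ cannot be adjoined at stage $n$ while $w_{i+1}$ is absent at stage $n-1$; in the singular-vertex case $\alpha$ is never adjoined, as the saturation adjoins only regular vertices. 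Thus $\overline{H(v)} \subsetneq E^0$, while $v \in \overline{H(v)}$ shows it is nonempty; since $H \mapsto I_H$ is injective on saturated hereditary subsets and $p_v \in I_{\overline{H(v)}}$ is nonzero, $I_{\overline{H(v)}}$ is a proper nonzero ideal of $C^*(E)$, contradicting simplicity. So $E$ is cofinal.

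I expect the main obstacle to be pinning down the two structural inputs rather than the bookkeeping around them: that $\{w \in E^0 : p_w \in I\}$ genuinely absorbs the full saturation $\overline{H(v)}$ (the finite-sum computation above, which relies on the Cuntz-Krieger relation at regular vertices), and, in the necessity half, that $I_H \neq C^*(E)$ for $H \subsetneq E^0$ --- which I get here for free from the injectivity of $H \mapsto I_H$ on saturated hereditary sets recorded earlier, but which is the one step truly reaching beyond the elementary toolkit. The two path-extraction arguments (producing an infinite path or a singular vertex from a failure of saturation or of cofinality) are routine once the saturation is tracked stage by stage.
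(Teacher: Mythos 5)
Your handling of the prime and primitive parts matches the paper exactly (both are quoted from Proposition~\ref{primenessproposition} and Theorem~\ref{primitivitytheorem}), but for the simplicity criterion you and the paper diverge: the paper simply cites \cite[Theorem~2.12]{Tom9}, remarking that the countable-graph proof goes through verbatim for uncountable graphs, whereas you give a self-contained argument built from the tools already set up in the paper --- the Cuntz--Krieger Uniqueness Theorem (Theorem~\ref{CKUT-thm}), Lemma~\ref{reachability-implies-in-I-lem}, the stagewise description of the saturation, and the injectivity of $H \mapsto I_H$ on saturated hereditary subsets (which the paper records, so you may legitimately lean on it). Your argument is correct as written: the absorption of $\overline{H(v)}$ into $\{w : p_w \in I\}$ via the Cuntz--Krieger relation at regular vertices, the extraction of a singular vertex or infinite path outside a non-full saturated set, and the minimal-stage induction showing $\alpha^0 \cap \overline{H(v)} = \emptyset$ when $v \not\geq \alpha^0$ are all sound; in effect you re-derive, inside this proof, the equivalence ``$E$ is cofinal iff $\overline{H(v)} = E^0$ for all $v$'' that the paper establishes separately as Proposition~\ref{graphconditionsforthreeproperties}(1). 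What the paper's route buys is brevity and deference to the literature (at the cost of the reader checking that the cited proof extends beyond countable graphs); what your route buys is a proof valid for arbitrary graphs with no external input beyond what the paper has already proved or explicitly recorded, at the cost of a page of bookkeeping and some duplication with the later graph-theoretic section.
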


\begin{proof}
Proposition~\ref{primenessproposition} and Theorem~\ref{primitivitytheorem} give the stated conditions for a graph $C^*$-algebra to be prime and primitive, respectively.  The conditions for simplicity are established in \cite[Theorem~2.12]{Tom9}.  (Although all of \cite{Tom9} is done under the implicit assumption the graphs are countable, the proof of \cite[Theorem~2.12]{Tom9} and the proofs of the results on which it relies all go through verbatim for uncountable graphs.)
\end{proof}

Every $C^*$-algebra has a nonzero irreducible representation. (This follows from the GNS construction, which shows that GNS-representations constructed from pure states are irreducible \cite[Lemma~A.12]{RW}, and the Krein-Milman Theorem, which asserts that pure states exist for any $C^*$-algebra \cite[Lemma~A.13]{RW}.)  Thus any simple $C^*$-algebra has a faithful irreducible $*$-representation, and any simple $C^*$-algebra is necessarily primitive.   Moreover, as was shown in Lemma~\ref{primitiveimpliesprime}, any primitive $C^*$-algebra is necessarily prime.  Thus we have 
 $$ \text{$C^*(E)$ is simple} \  \implies \ \text{$C^*(E)$ is primitive} \ \implies \ \text{$C^*(E)$ is prime.} $$

We observe that the form of each of the three  results presented in Proposition \ref{threesetsofconditions}, in which simplicity, primeness, and primitivity of $C^*(E)$ are given in graph-theoretic terms,   may be described as ``Condition (L) plus something extra".  This having been said, our goal for this section is solely graph-theoretic:  we  show that these three  ``extra" conditions may be seen as arising in a common  context, by considering subsets of $E^0$ of the form $\overline{H(v)}$.

\begin{proposition}\label{graphconditionsforthreeproperties}
Let $E$ be a graph.  Then the following equivalences hold.
\begin{itemize}
\item[(1)] $E$ is cofinal if and only if for all $v \in E^0$ one has $\overline{H(v)} = E^0$.
\item[(2)] $E$ is downward directed if and only if for all $v, w \in E^0$ one has $\overline{H(v)}  \cap \overline{H(w)}  \neq \emptyset$.
\item[(3)] $E$ satisfies the Countable Separation Property if and only if there exists a countable collection of subsets of vertices $\{ S_i : i \in I \}$ (so, $I$ is countable and $S_i \subseteq E^0$ for all $i \in I$) with $E^0 = \bigcup_{i \in I} S_i$ and with $\bigcap_{v \in S_i} \overline{H(v)}  \neq \emptyset$ for all $i \in I$.
\end{itemize}
\end{proposition}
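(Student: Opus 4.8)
The plan is to prove the three equivalences separately; part (2) will be essentially a restatement of Lemma~\ref{sat-int-lem}, part (1) will need one elementary ``escape'' observation, and part (3) will be the delicate one.

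For part (2): if $E$ is downward directed and $v,w\in E^0$, choosing $x$ with $v\ge x$ and $w\ge x$ gives $x\in H(v)\cap H(w)\subseteq\overline{H(v)}\cap\overline{H(w)}$, so the latter is nonempty. Conversely, if $\overline{H(v)}\cap\overline{H(w)}\neq\emptyset$, then since $H(v)$ and $H(w)$ are hereditary, the contrapositive of Lemma~\ref{sat-int-lem} forces $H(v)\cap H(w)\neq\emptyset$, and any vertex in this intersection witnesses downward directedness. (This mirrors the argument already used in Proposition~\ref{primenessproposition}.)

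For part (1) I would first record the ``escape lemma'': if $H$ is a saturated hereditary subset and $w\notin H$, then greedily following edges out of $w$ — at a regular vertex $x\notin H$, saturation yields an edge $e$ with $s(e)=x$ and $r(e)\notin H$ — produces either an infinite path from $w$ whose vertex set misses $H$, or a path from $w$ to a singular vertex not in $H$. Granting this: if $E$ is cofinal and $\overline{H(v)}\neq E^0$, pick $w\notin\overline{H(v)}$ and apply the escape lemma to the saturated hereditary set $\overline{H(v)}$; in either alternative, cofinality produces a vertex of $\alpha^0$ (resp.\ the singular vertex) lying in $H(v)\subseteq\overline{H(v)}$, a contradiction. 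For the converse, assume $\overline{H(v)}=E^0$ for all $v$ and fix $v\in E^0$ and $\alpha\in E^\infty\cup E^0_\textnormal{sing}$. If $\alpha=u$ is singular, then $u\in E^0=\overline{H(v)}$; since the chain $H(v)=H(v)_0\subseteq H(v)_1\subseteq\cdots$ of Definition~\ref{sat-hered-def} only ever adjoins regular vertices, $u$ already lies in $H(v)_0=H(v)$, i.e.\ $v\ge u$. If $\alpha\in E^\infty$, the set $T:=\{x\in E^0: x\not\ge w'\text{ for all }w'\in\alpha^0\}$ is checked to be saturated and hereditary; it cannot contain $v$, for otherwise $\overline{H(v)}\subseteq T$, contradicting $s(e_1)\in\alpha^0$ (and $s(e_1)\not\in T$) together with $\overline{H(v)}=E^0$. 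Hence $v\ge w'$ for some $w'\in\alpha^0$, which is cofinality.

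The hard part is part (3). The forward implication is easy: taking the countable set $X=\{x_i\}$ supplied by the Countable Separation Property and setting $S_i:=U(x_i)$ gives $E^0=\bigcup_i S_i$ and $x_i\in\bigcap_{v\in S_i}\overline{H(v)}$. For the converse I am given the $S_i$ and, for each $i$, a vertex $y_i\in\bigcap_{v\in S_i}\overline{H(v)}$; the obstacle is that $y_i\in\overline{H(v)}$ does not yield $v\ge y_i$, since saturation may have inserted $y_i$. The key claim, proved by induction on the least $n$ with $y\in H(v)_n$, is: \emph{if $y\in\overline{H(v)}$, then there is a vertex $z$ with $v\ge z$ and a path from $y$ to $z$ all of whose non-terminal vertices are regular} — the inductive step uses that a vertex first appearing at stage $n$ is regular and emits only into stage $n-1$. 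Now let $T(y)$ be the set of all vertices reachable from $y$ by such a path; since regular vertices have finite out-degree, these paths form a finitely branching tree, hence $T(y)$ is countable. Setting $X:=\bigcup_{i}T(y_i)$ — countable, as $I$ is — the claim shows every $v\in S_i$ satisfies $v\ge z$ for some $z\in T(y_i)\subseteq X$, and since the $S_i$ cover $E^0$ this is exactly the Countable Separation Property. The one nontrivial point to get right is this finite-branching/countability packaging together with the induction; the remaining verifications are routine.
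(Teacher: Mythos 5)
Your proposal is correct, and for part (3) — the delicate one — it is essentially the paper's argument: the same forward direction via $S_i := U(x_i)$, and the same converse built on the claim that $y\in\overline{H(v)}$ forces a path from $y$ through regular vertices to a vertex of $H(v)$, with countability of the reachable set $T(y_i)$ (the paper's $C_{v_i}$) coming from finite out-degree of regular vertices; you merely make the induction on the saturation level explicit where the paper asserts it. The differences are in (1) and (2). For the converse of (2) you invoke Lemma~\ref{sat-int-lem} (as in Proposition~\ref{primenessproposition}), whereas the paper re-runs a level-descent argument producing an explicit path from a point of $\overline{H(v)}\cap\overline{H(w)}$ down to $H(v)\cap H(w)$; your route is shorter and perfectly valid, the paper's has the minor virtue of exhibiting the common vertex constructively. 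For the implication ``$\overline{H(v)}=E^0$ for all $v$ implies cofinal'' in (1), in the infinite-path case the paper again descends through the sets $H(v)_n$ along the path $\alpha$, while you instead verify that $T=\{x: x\not\geq w' \text{ for all } w'\in\alpha^0\}$ is saturated and hereditary and use minimality of the saturation to force $v\notin T$; both work (your saturation check, including the case of a regular vertex lying on $\alpha$, goes through), and your version trades the explicit descent for a slightly more abstract lattice-style argument that reuses the same mechanism as in (3). The escape argument in the other direction of (1) coincides with the paper's.
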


\begin{proof}
For (1), suppose first that for all $v \in E^0$ we have $\overline{H(v)}  = E^0$.  Choose $v \in E^0$ and $\alpha \in E^\infty \cup E^0_\textnormal{sing}$.    If $\alpha \in E^0_\textnormal{sing}$, then $\alpha \in\overline{H(v)}  = E^0$.  Since every element of $\overline{H(v)}  \setminus H(v)$ is a regular vertex, it follows that $\alpha \in H(v)$ and $v \geq \alpha$.  If instead $\alpha \in E^\infty$, then we may write $\alpha = e_1 e_2 e_3 \ldots$ for $e_i \in E^1$ with $r(e_i) = s(e_{i+1})$ for all $i \in \N$.  Since $s(e_1) \in \overline{H(v)} = E^0$, it follows that $s(e_i) \in H(v)_n$ for some $n \in \N$  (recall the notation of Lemma \ref{sat-int-lem}).  Thus $s(e_2) = r(e_1) \in H(v)_{n-1}$,
and continuing recursively we have $s(e_n) = r(e_{n-1}) \in H(v)_1$, and $s(e_{n+1}) = r(e_n) \in H(v)_0 = H(v)$.  Hence $v \geq s(e_{n+1})$, and $v \geq \alpha^0$.  Thus $E$ is cofinal.  Conversely, if there exists $v \in E^0$ with $ \overline{H(v)} \neq E^0$, then there is a vertex $w \in E^0 \setminus \overline{H(v)}$.  Since $\overline{H(v)}$ is saturated, either $w \in E^0_\textnormal{sing}$ or there exists an edge $e_1 \in E^1$ with $s(e_1) = w$ and $r(e_1) \notin \overline{H(v)}$.  Using $r(e_1)$ and continuing inductively, we either produce a singular vertex $z \in E^0 \setminus \overline{H(v)}$ or an infinite path $\alpha := e_1 e_2 e_3 \ldots$ with $s(e_i) \in E^0 \setminus \overline{H(v)}$.  Since $H(v) \subseteq \overline{H(v)}$, it follows that either there is a singular vertex $z$ with $v \not\geq z$ or there is an infinite path $\alpha$ with $v \not\geq \alpha^0$.  Hence $E$ is not cofinal.

For (2), suppose first that $E$ is downward directed.  If $v,w \in E^0$, then the definition of downward directed implies that $H(v) \cap H(w) \neq \emptyset$.  Since $H(v) \subseteq \overline{H(v)}$ and $H(w) \subseteq \overline{H(w)}$, it follows that $\overline{H(v)} \cap \overline{H(w)} \neq \emptyset$.  Conversely, suppose that for all $v, w \in E^0$ one has $\overline{H(v)} \cap \overline{H(w)} \neq \emptyset$.  Then for any $v, w \in E^0$, we may choose $x \in \overline{H(v)} \cap \overline{H(w)}$.  Using the notation of Definition~\ref{sat-hered-def} write $\overline{H(v)} = \bigcup_{n=0}^\infty H(v)_n$ and $\overline{H(w)} = \bigcup_{n=0}^\infty H(w)_n$.  Choose the smallest $n_1 \in \N \cup \{ 0 \}$ such that $x \in  H(v)_{n_1}$, and choose the smallest $n_2 \in \N \cup \{ 0 \}$ such that $x \in  H(w)_{n_2}$.  If we let $n := \max \{ n_1, n_2 \}$, then $x \in H(v)_n \cap  H(w)_n$.  If $n= 0$, then $x \in H(v) \cap H(w)$ and we have that $v \geq x$ and $w \geq x$.  If $n \geq 1$, then there exists an edge $e_1 \in E^1$ such that $s(e_1) = x$ and $r(e_1) \in H(v)_{n-1} \cap H(w)_{n-1}$.  Using $r(e_1)$ next, and continuing recursively, we produce a finite path $\alpha := e_1 \ldots e_n$ with $r(e_n) \in H(v) \cap H(w)$.  Hence $v \geq r(e_n)$ and $w \geq r(e_n)$.  Thus $E$ is downward directed.

For (3), suppose first that $E$ satisfies the Countable Separation Property.  Then there is a countable nonempty set $X \subseteq E^0$ such that $E^0 = \bigcup_{x \in X} U(x)$.  In addition, $x \in \bigcap_{v \in U(x)} \overline{H(v)}$ for all $x \in X$, so $\bigcap_{v \in U(x)} \overline{H(v)} \neq \emptyset$ for all $x \in X$.  Thus the condition in (3) holds with $I := X$ and $S_i := U(i)$ for all $i \in I$.  Conversely, suppose that there exists a countable collection of subsets of vertices $\{ S_i : i \in I \}$ with $E^0 = \bigcup_{i \in I} S_i$ and with $\bigcap_{v \in S_i} \overline{H(v)}\neq \emptyset$ for all $i \in I$.  For each $i \in I$, choose a vertex $v_i \in \bigcap_{v \in S_i} \overline{H(v)}$ and define $$C_{v_i} := \{ r(\alpha) : \text{$\alpha \in \path (E)$, $s(\alpha) = v$, and $s(\alpha_i) \in E^0_\textnormal{reg}$ for all $1 \leq i \leq |\alpha|$} \}.$$  (Note that if $v_i \in E^0_\textnormal{sing}$, then $C_{v_i} := \{ v_i \}$.)  Since there are only a finite number of edges emitted from any regular vertex, we see that $C_{v_i}$ is a countable set for all $i \in I$.  We define $X := \bigcup_{i \in I} C_{v_i}$, and observe that since $X$ is a countable union of countable sets, $X$ is countable.  If $w \in E^0$ is any vertex, then by the hypothesis that $E^0 = \bigcup_{i \in I} S_i$ there exists $i \in I$ such that $w \in S_i$.  By the definition of $v_i$ we then have that $v_i \in \overline{H(w)}$.  Hence there exists a path $\alpha \in \path (E)$ such that $s(\alpha) = v_i$, $r(\alpha) \in H(w)$, and $s(\alpha_i) \in E^0_\textnormal{reg}$ for all $1 \leq i \leq |\alpha|$.  Thus $r(\alpha) \in C_{v_i}$, and $w \geq r(\alpha)$, so that $w \in \bigcup_{x \in X} U(x)$.  We have therefore shown that $E^0 = \bigcup_{x \in X} U(x)$, and hence $E$ satisfies the Countable Separation Property.
\end{proof}

It is clear that Property (1) of Proposition \ref{graphconditionsforthreeproperties} implies both Property (2) and Property (3) of that Proposition.  (Note that we may use the singleton set $S = E^0$ to establish Property (3) from Property (1).)   Thus, as promised, using Proposition \ref{threesetsofconditions}, we have established a natural connection between simplicity, primitivity, and primeness for graph $C^*$-algebras from a graph-theoretic point of view.   We summarize this observation as the following result. 

\begin{corollary}
Let $E$ be a graph.

\noindent The graph $C^*$-algebra $C^*(E)$ is \textbf{simple} if and only if the following two conditions are satisfied
\begin{itemize}
\item[(1)] $E$ satisfies Condition~(L)
\item[(2)] If $v \in E^0$, then $\overline{H(v)} = E^0$.
\end{itemize}

\noindent The graph $C^*$-algebra $C^*(E)$ is \textbf{primitive} if and only if the following three conditions are satisfied
\begin{itemize}
\item[(1)] $E$ satisfies Condition~(L)
\item[(2)] If $v, w \in E^0$, then $\overline{H(v)} \cap \overline{H(w)} \neq \emptyset$.
\item[(3)] There exists a countable collection of subsets of vertices $\{ S_i : i \in I \}$ (so, $I$ is countable and $S_i \subseteq E^0$ for all $i \in I$) such that $E^0 = \bigcup_{i \in I} S_i$ and $\bigcap_{v \in S_i} \overline{H(v)} \neq \emptyset$ for all $i \in I$.
\end{itemize}

\noindent The graph $C^*$-algebra $C^*(E)$ is \textbf{prime} if and only if the following two conditions are satisfied
\begin{itemize}
\item[(1)] $E$ satisfies Condition~(L)
\item[(2)] If $v, w \in E^0$, then $\overline{H(v)}\cap \overline{H(w)} \neq \emptyset$.
\end{itemize}
\end{corollary}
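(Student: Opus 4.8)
The plan is to obtain this corollary by simply composing the two preceding results, Proposition~\ref{threesetsofconditions} and Proposition~\ref{graphconditionsforthreeproperties}: the former characterizes simplicity, primitivity, and primeness of $C^*(E)$ in terms of the graph-theoretic conditions ``cofinal'', ``downward directed'', and ``the Countable Separation Property'', while the latter rewrites each of these three conditions as a statement about the saturated hereditary sets $\overline{H(v)}$. So the proof is essentially a substitution.

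First I would invoke Proposition~\ref{threesetsofconditions} to record that $C^*(E)$ is simple if and only if $E$ satisfies Condition~(L) and is cofinal; that $C^*(E)$ is primitive if and only if $E$ satisfies Condition~(L), is downward directed, and has the Countable Separation Property; and that $C^*(E)$ is prime if and only if $E$ satisfies Condition~(L) and is downward directed. Then I would substitute, in each of these three statements, the equivalences supplied by Proposition~\ref{graphconditionsforthreeproperties}: part~(1) replaces ``$E$ is cofinal'' by ``$\overline{H(v)} = E^0$ for all $v \in E^0$''; part~(2) replaces ``$E$ is downward directed'' by ``$\overline{H(v)} \cap \overline{H(w)} \neq \emptyset$ for all $v,w \in E^0$''; and part~(3) replaces ``$E$ satisfies the Countable Separation Property'' by the existence of a countable family $\{S_i : i \in I\}$ of subsets of $E^0$ with $E^0 = \bigcup_{i \in I} S_i$ and $\bigcap_{v \in S_i} \overline{H(v)} \neq \emptyset$ for every $i \in I$. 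Carrying out these substitutions produces verbatim the three displayed characterizations.

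Since both ingredient propositions are already established, there is no real obstacle here: the argument is pure bookkeeping, and the only point worth flagging is that the reformulation of ``downward directed'' must be used consistently in both the primitivity and the primeness characterizations (which it is). If desired, one can append the observation recorded just before the corollary — namely that condition~(1) of Proposition~\ref{graphconditionsforthreeproperties} formally implies conditions~(2) and~(3) (using $S = E^0$ for the latter) — so that the displayed list for simplicity visibly refines the list for primitivity, which in turn refines the list for primeness, matching the chain of implications $C^*(E)\text{ simple} \Rightarrow C^*(E)\text{ primitive} \Rightarrow C^*(E)\text{ prime}$.
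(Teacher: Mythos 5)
Your proposal is correct and matches the paper's own (implicit) argument exactly: the corollary is obtained by substituting the equivalences of Proposition~\ref{graphconditionsforthreeproperties} into the three characterizations of Proposition~\ref{threesetsofconditions}. The closing remark about condition~(1) implying conditions~(2) and~(3) is likewise the observation the paper makes just before stating the corollary.
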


We conclude this graph-theoretic section with the following observation.  Since the simplicity of $C^*(E)$ clearly implies its primeness, it is perhaps not surprising that there should be a direct graph-theoretic connection between the germane properties appearing in Proposition \ref{threesetsofconditions}.   Indeed, one can easily see that cofinal implies downward directed, as follows. If $E$ is cofinal, and $v, w \in E^0$, then one may inductively create a sequence of edges $\alpha := e_1 e_2 e_3 \ldots$ with $s(e_1) = w$ and $s(e_i) = r(e_{i-1})$ for all $i \geq 2$, and such that this sequence either ends at a sink or goes on forever to produce an infinite path.  Hence either $v \geq r(\alpha)$ (if $\alpha$ ends at a sink) or $v \geq \alpha^0$ (if $\alpha$ is an infinite path), and $E$ is downward directed.  

From this point of view, the difference between the notion of cofinal and the notion of downward directed can be viewed as follows:  $E$ is cofinal if and only if ``for all $v,w \in E^0$ and for {\it  all} $\alpha \in \path (E)$ with $s(\alpha) = w$ there exists $x \in E^0$ such that $v \geq x$ and $r(\alpha) \geq x$", while $E$ is downward directed if and only if ``for all $v,w \in E^0$ and for {\it some} $\alpha \in {\rm Path}(E)$ with $s(\alpha) = w$  there exists $x \in E^0$ such that $v \geq x$ and $r(\alpha) \geq x$."  Specifically, the cofinality property allows for the path from one of the vertices to start along any specified initial segment $\alpha$, while the downward directedness property contains no such requirement.

\section{Primality and primitivity of graph $C^*$-algebras compared with primality and primitivity of Leavitt path algebras}

In this final section we  compare the notions of primeness and primitivity for graph $C^*$-algebras $C^*(E)$ with primeness and primitivity for Leavitt path algebras.  Briefly, for any graph $E$ and any field $K$, one may define the $K$-algebra $L_K(E)$, the  {\it Leavitt path algebra of} $E$ {\it with coefficients in} $K$. When $K = \C$, then $L_\C(E)$ may be viewed as a dense $*$-subalgebra of $C^*(E)$.   For reasons which remain not well understood, many structural properties are simultaneously shared by both $L_\C(E)$ and $C^*(E)$.   We show in this section that the primitivity property may be added to this list.   Additional information about Leavitt path algebras may be found in  \cite{AAS} or   \cite{ABR}.

The map $\sum_{i=1}^n \lambda _i \alpha_i \beta_i^* \mapsto \sum_{i=1}^n \lambda _i \beta_i \alpha_i^*$ is a $K$-algebra isomorphism from $L_K(E)$ onto its opposite algebra $L_K(E)^\textnormal{op}$.  Hence there is a natural correspondence between left $L_K(E)$-modules and right $L_K(E)$-modules, which yields 

\begin{proposition} \cite[Proposition 2.2]{ABR}  \label{LPA-left-prim-iff-right-prim-lem}
If $E$ is a graph and $K$ is a field, then the algebra $L_K(E)$ is left primitive if and only if it is right primitive.
\end{proposition}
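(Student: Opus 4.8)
The plan is to deduce the statement from the displayed $K$-algebra isomorphism $L_K(E) \cong L_K(E)^{\textnormal{op}}$ together with the standard dictionary between one-sided modules over a ring and over its opposite. Write $\theta$ for the map $\sum_{i} \lambda_i \alpha_i \beta_i^* \mapsto \sum_i \lambda_i \beta_i \alpha_i^*$. First I would record (or, following \cite[Proposition~2.2]{ABR}, simply invoke) that $\theta$ is a well-defined $K$-linear anti-automorphism of $L_K(E)$ with $\theta^2 = \operatorname{id}$; equivalently, a $K$-algebra isomorphism $L_K(E) \to L_K(E)^{\textnormal{op}}$. The one point requiring care here is well-definedness: setting $\theta(v) = v$, $\theta(e) = e^*$, $\theta(e^*) = e$ on generators and extending by $\theta(xy) := \theta(y)\theta(x)$, one checks that the defining relations of $L_K(E)$ (the orthogonality of the vertex idempotents, $e^*f = \delta_{e,f}\, r(e)$, the source/range relations, and the Cuntz--Krieger relation $v = \sum_{s(e)=v} e e^*$ at each regular vertex $v$) are each carried to a relation already holding in $L_K(E)$, so $\theta$ descends to an endomorphism of $L_K(E)$; since it is clearly involutive on the spanning set $\{\alpha\beta^* : \alpha, \beta \in \path(E)\}$, it is bijective.

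Second, I would invoke the module correspondence. A right $L_K(E)$-module $M$ is the same as a left $L_K(E)^{\textnormal{op}}$-module, and composing its action with the isomorphism $\theta$ turns this into a left $L_K(E)$-module $M^\theta$, whose action is $r \cdot m := m \cdot \theta(r)$. The assignment $M \mapsto M^\theta$ is an equivalence between the category of right $L_K(E)$-modules and the category of left $L_K(E)$-modules: it is additive, it is its own quasi-inverse because $\theta^2 = \operatorname{id}$, and it preserves the relevant properties. In particular, $N \subseteq M$ is a right submodule if and only if it is a left submodule of $M^\theta$, so $M$ is simple if and only if $M^\theta$ is simple; and $\operatorname{ann}_{L_K(E)}(M^\theta) = \theta^{-1}\big(\operatorname{ann}_{L_K(E)}(M)\big)$, so $M^\theta$ is faithful if and only if $M$ is faithful.

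Finally, if $L_K(E)$ is right primitive, choose a faithful simple right module $M$; then $M^\theta$ is a faithful simple left module, so $L_K(E)$ is left primitive. Applying the same argument with the roles of left and right reversed (and using $\theta^{-1} = \theta$) gives the converse, so the two notions coincide. The only genuine content is the verification in the first step that $\theta$ respects the Leavitt path algebra relations --- in particular the Cuntz--Krieger relation at regular vertices; the remainder is the formal opposite-ring argument, which I would keep brief.
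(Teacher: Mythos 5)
Your proposal is correct and follows exactly the route the paper takes (citing \cite[Proposition~2.2]{ABR}): the map $\sum_i \lambda_i \alpha_i\beta_i^* \mapsto \sum_i \lambda_i \beta_i\alpha_i^*$ is an involutive $K$-algebra isomorphism $L_K(E) \to L_K(E)^{\textnormal{op}}$, and the resulting correspondence between left and right modules transports faithful simple modules in both directions. Your additional verification that the map respects the Leavitt path algebra relations is a fine elaboration of what the paper leaves implicit, but it is the same argument.
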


\begin{definition}
In light of Proposition~\ref{LPA-left-prim-iff-right-prim-lem}, we shall say a Leavitt path algebra $L_K(E)$ is \emph{primitive} if it is left primitive (which occurs if and only if $L_K(E)$ is also right primitive).
\end{definition}

\begin{remark}
When we say $L_K(E)$ is primitive, we mean that $L_K(E)$ is primitive as a ring.  The astute reader may notice that it seems more natural to consider primitivity of $L_K(E)$ as an algebra; that is, to reformulate the definition of primitive as having a simple faithful left \emph{$K$-algebra} module (not merely a simple faithful left \emph{ring} module).  However, since $L_K(E)$ has local units, any ring module also carries a natural structure as a $K$-algebra module.  Hence any Leavitt path algebra is primitive as a ring if and only if it is primitive as a $K$-algebra.  Likewise, again using that  Leavitt path algebras have local units, any ring ideal of $L_K(E)$ is closed under scalar multiplication by $K$, and hence the ring ideals of $L_K(E)$ are precisely the $K$-algebra ideals of $L_K(E)$.  Consequently, a Leavitt path algebra is prime as a ring if and only if it is prime as a $K$-algebra, and a Leavitt path algebra is simple as a ring if and only if it is simple as a $K$-algebra.   Thus for Leavitt path algebras the ring-theoretic notions of primitive, prime, and simple coincide with the corresponding $K$-algebra-theoretic notions.
\end{remark}

\begin{theorem}\label{CstarEprimitiveiffL(E)primitive}
Let $E$ be a graph.  Then the following are equivalent.
\begin{itemize}
\item[(i)]   $C^*(E)$ is primitive.
\item[(ii)]  $L_K(E)$ is primitive for some field $K$.
\item[(iii)]  $L_K(E)$ is primitive for every field $K$.  
\item[(iv)]  $E$ satisfies Condition~(L), is downward directed, and satisfies the Countable Separation Property. 
\end{itemize} 
\end{theorem}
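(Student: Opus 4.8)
The plan is to prove the four-way equivalence by establishing (iv) as the central hub, since Theorem~\ref{primitivitytheorem} already gives (i) $\iff$ (iv) for free. It then suffices to show (iv) $\iff$ (ii) and (iv) $\iff$ (iii), and since (iii) $\implies$ (ii) trivially, I really only need two implications beyond what is already proved: (iv) $\implies$ (iii) and (ii) $\implies$ (iv). This mirrors the structure the paper has already set up, and it isolates the genuinely new content as a statement purely about Leavitt path algebras.

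For (iv) $\implies$ (iii), I would argue that the construction carried out in the \textsc{Proof of Sufficiency} portion of Theorem~\ref{primitivitytheorem} is essentially algebraic and transfers to $L_K(E)$ over an arbitrary field $K$. Concretely: from the Countable Separation Property produce the countable set $X = \{v_i\}_{i \in I}$ and, using downward directedness, build the chain of paths $\{\lambda_i\}$ with $\lambda_i = \lambda_{i-1}\mu_i$ and $v_i \geq r(\lambda_i)$; form the left ideal $L := \{\sum_{i=1}^n (x_i - x_i s_{\lambda_i} s_{\lambda_i}^*) : n \in I,\ x_i \in L_K(E)\}$. The same computation (multiply on the right by $s_{\lambda_n}s_{\lambda_n}^*$ and use $s_{\lambda_i}s_{\lambda_i}^* s_{\lambda_n}s_{\lambda_n}^* = s_{\lambda_n}s_{\lambda_n}^*$) shows $p_{v_1} \notin L$. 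Since $L_K(E)$ has local units rather than an identity, I would not use Zorn/closure as in the $C^*$-case; instead one checks that $L$ is contained in a maximal left ideal $M$ with $p_{v_1} \notin M$ --- here one uses that $a - a p_{v_1} \in L$ for all $a$, which guarantees any proper left ideal containing $L$ stays proper --- and then that $L_K(E)/M$ is a simple left module that is faithful. Faithfulness again reduces to showing that a nonzero two-sided ideal of $L_K(E)$ contains some $s_{\lambda_n}s_{\lambda_n}^*$; this is the algebraic analogue of the Cuntz--Krieger step and follows from Condition~(L) via the graded/Cuntz--Krieger uniqueness theorem for Leavitt path algebras together with the reachability lemma, exactly as in the $C^*$-proof. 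One should cite \cite{ABR} here, since Theorem~5.7 of that paper is precisely this direction, so this step can largely be quoted rather than reproven.

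For (ii) $\implies$ (iv), assume $L_K(E)$ is primitive for some $K$. Primitivity implies primeness (a faithful simple module kills any ideal annihilating it, by the same argument as Lemma~\ref{primitiveimpliesprime} but purely algebraically), and the known characterization of primeness of $L_K(E)$ --- Condition~(L) plus downward directedness, again from \cite{ABR} --- gives (i) and (ii) of (iv). For the Countable Separation Property I would adapt the argument of the Proposition preceding Corollary~\ref{primitive-implies-CSP-cor}: a faithful simple left module $M$ is in particular cyclic, $M = L_K(E)\cdot \xi$ for a single $\xi$, so it is ``countably generated'' in the relevant sense (in fact singly generated). Then, writing the sets $\Gamma_{n,i}$ of paths $\alpha$ of length $n$ with $\pi(s_\alpha s_\alpha^*)\xi \neq 0$, one uses that $\{s_\alpha s_\alpha^* : |\alpha| = n\}$ are mutually orthogonal idempotents to bound each $\Gamma_{n,\xi}$: here, in place of the Hilbert-space Pythagoras argument of Lemma~\ref{nouncountablesetsoforthogidempotents}, one argues that $\pi(s_\alpha s_\alpha^*)\xi$ for distinct $\alpha$ of the same length lie in independent submodules (since $s_\alpha s_\alpha^* \cdot s_\beta s_\beta^* = 0$), so only countably many can be nonzero as $\xi$ is a single vector in a module generated by $\xi$ --- more carefully, $M \cong L_K(E)/M'$ and one bounds the number of nonzero ``coordinates'' using that each element of $L_K(E)$ is a finite $K$-linear combination of monomials $s_\alpha s_\beta^*$. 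Faithfulness then forces, for every vertex $v$, the two-sided ideal $I_{H(v)}$ to act nontrivially on $\xi$, producing (as in the cited Proposition) a path $\beta$ with $r(\beta) \in H(v)$ and $\beta \in \Gamma_{|\beta|,\xi}$, whence $v \geq r(\beta)$ and $v$ lies in $\bigcup_{\alpha \in \Gamma} U(r(\alpha))$ with $\Gamma$ countable. That gives the Countable Separation Property, completing (iv).

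The main obstacle I anticipate is the absence of a unit and of an ambient Hilbert space on the Leavitt path algebra side: the $C^*$-proof leans on approximate units, norm-closure of maximal modular left ideals (to get a $*$-representation on a Hilbert space), and the Pythagorean/summability estimate of Lemma~\ref{nouncountablesetsoforthogidempotents}. For the Leavitt path algebra one must instead work with local units, replace the analytic maximal-ideal argument by a purely algebraic Zorn's lemma argument for maximal left ideals (no closure needed, since we only want a simple module, not a $*$-representation), and replace the orthogonality-of-projections-in-a-Hilbert-space counting bound by the observation that the idempotents $s_\alpha s_\alpha^*$, $|\alpha| = n$, are orthogonal in the ring and hence the submodules $\pi(s_\alpha s_\alpha^*)M$ they cut out of a cyclic module can only be nonzero for countably many $\alpha$ — this last point is the place where one must be most careful, and where invoking the detailed arguments of \cite{ABR} (Theorem~5.7 and its supporting lemmas) is the cleanest route. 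Since \cite{ABR} already proves (ii) $\iff$ (iii) $\iff$ (iv), the honest shortest path is: cite \cite[Theorem~5.7]{ABR} for (ii) $\iff$ (iii) $\iff$ (iv), cite Theorem~\ref{primitivitytheorem} for (i) $\iff$ (iv), and remark that the two proofs are genuinely independent, as the introduction already emphasizes.
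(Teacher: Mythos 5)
Your bottom-line proof is exactly the paper's: the paper disposes of this theorem in two lines, citing Theorem~\ref{primitivitytheorem} for (i) $\iff$ (iv) and \cite[Theorem~5.7]{ABR} for (ii) $\iff$ (iii) $\iff$ (iv), so in its final form your argument is correct and takes the same approach. One concrete warning about your sketched self-contained route for (ii) $\implies$ (iv): the step claiming that only countably many of the orthogonal idempotents $s_\alpha s_\alpha^*$ (for $|\alpha|=n$) can act nonzero on a single generator of a cyclic module is false in the purely algebraic setting --- the paper itself notes in its last section that the analogue of Lemma~\ref{nouncountablesetsoforthogidempotents} fails for Leavitt path algebras, exhibiting the graph $E_U$ with one vertex and uncountably many loops, where the uncountably many orthogonal idempotents $e_ie_i^*$ all fail to annihilate $1_R$ in ${}_RR$; the Hilbert-space Pythagorean estimate has no algebraic substitute of this kind, which is precisely why the necessity direction in \cite[Proposition~5.6]{ABR} is structurally different from the $C^*$-argument and should be quoted rather than transplanted. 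Your sufficiency sketch (iv) $\implies$ (iii) does adapt essentially as you describe (it is in effect \cite[Theorem~3.5 with Proposition~4.8]{ABR}, with the graded/algebraic uniqueness theorem replacing the Cuntz--Krieger Uniqueness Theorem), so deferring both Leavitt-path-algebra directions to \cite[Theorem~5.7]{ABR}, as you ultimately do, is the right call.
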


\begin{proof}
The equivalence of (i) and (iv) is precisely Theorem~\ref{primitivitytheorem}.  The equivalence of (ii), (iii), and (iv), is shown in \cite[Theorem 5.7]{ABR}. 
\end{proof}

\medskip

Theorem~\ref{CstarEprimitiveiffL(E)primitive} provides yet another example of a situation in which the same ring-theoretic property holds for both of the algebras $C^*(E)$ and $L_\C(E)$ (indeed, $L_K(E)$ for any field $K$), but for which the proof  that the pertinent property holds in each case is wildly different.  In particular, no ``direct" connection between $C^*(E)$ and $L_\C(E)$ is established.  We note that the proof of the sufficiency direction of Theorem~\ref{primitivitytheorem} looks, on the surface, nearly identical to the proof that $L_\C(E)$ is primitive whenever $E$  satisfies Condition~(L), is downward directed, and satisfies the Countable Separation Property  \cite[Theorem 3.5 with Proposition 4.8]{ABR}.    However, in the proof of the result herein we invoke the Cuntz-Krieger Uniqueness Theorem, whose justification is significantly different than that of the correspondingly invoked algebraic result \cite[Corollary 3.3]{ABGM}.  Furthermore,  the proof of the necessity direction of Theorem~\ref{primitivitytheorem}  is significantly different than the proof of the analogous result for Leavitt path algebras \cite[Proposition 5.6]{ABR}.  In this regard, it is worth noting that for Leavitt path algebras, in contrast to Lemma~\ref{nouncountablesetsoforthogidempotents} for $C^*$-algebras, it is perfectly possible to have a graph $E$ and left $L_K(E)$-module $M$ containing an element  $m$  for which there exists an uncountable set of nonzero orthogonal projections in $L_K(E)$ which do not annihilate $m$.   
For example, let $U$ be an uncountable set, and let $E_U$ denote the graph having one vertex $v$, and uncountably many loops $\{e_i \ \vert \ i\in U\}$ at $v$.   Let    $R = L_K(E_U)$, and let $M = {}_RR$.  Then for $m = 1_R\in M$, $\{e_ie_i^* \ \vert \ i\in U\}$ is such a set.

In contrast to the result presented in Theorem \ref{CstarEprimitiveiffL(E)primitive}, the class of graphs which produce prime Leavitt path algebras is not the same as the class of graphs which produce prime graph $C^*$-algebras.   For example, if we let $E$ be the graph with one vertex and one edge

$ $

$$
\xymatrix{
\bullet \ar@(ur,ul) \\
}
$$
then for any field $K$, the Leavitt path algebra $L_K(E)$ is isomorphic to $K[x,x^{-1}]$, the algebra of Laurent polynomials with coefficients in $K$, which is prime.  (Indeed, $K[x,x^{-1}]$ is a commutative integral domain.)   However, the graph $C^*$-algebra $C^*(E)$ is isomorphic to $C(\T)$, the $C^*$-algebra of continuous functions on the circle, which is not prime.  

Thus ``primeness"  yields  one of the relatively uncommon contexts in which  an algebraic property of $L_K(E)$ does not coincide with the corresponding $C^*$-algebraic property of $C^*(E)$.  Hence the conditions on $E$ for $L_K(E)$ to be prime are different than the conditions on $E$ for $C^*(E)$ to be prime.

Necessary and sufficient conditions for a Leavitt path algebra to be prime are given in \cite[Corollary~3.10]{APS06} (see also   \cite[Theorem 2.4]{ABR}), which we state here.

\begin{proposition}\label{LPA-prime-iff-prop}
Let $E$ be a graph.  Then the following are equivalent
\begin{itemize}
\item[(i)] $L_K(E)$ is prime for some field $K$.
\item[(ii)] $L_K(E)$ is prime for every field $K$.
\item[(iii)] $E$ is downward directed. 
\end{itemize}
\end{proposition}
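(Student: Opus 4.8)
The plan is to establish the two nontrivial implications, (iii)$\Rightarrow$(ii) and (i)$\Rightarrow$(iii), since (ii)$\Rightarrow$(i) is immediate; both arguments go through uniformly for every field $K$, so write $L := L_K(E)$. For the direction ``$E$ not downward directed $\Rightarrow$ $L$ not prime'' I would mimic the downward-directedness half of the proof of Proposition~\ref{primenessproposition}. If $E$ is not downward directed, choose $u,v\in E^0$ with $H(u)\cap H(v)=\emptyset$; these are nonempty hereditary subsets, and the Leavitt-path-algebra analogue of Lemma~\ref{sat-int-lem} (the saturation is built by the same recursion, so the same induction applies) gives $\overline{H(u)}\cap\overline{H(v)}=\emptyset$. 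The ideals generated by these saturated hereditary sets are nonzero, since they contain the idempotents $u$ and $v$, and because $H\mapsto I(H)$ is a lattice embedding of the saturated hereditary subsets into the (graded) ideals of $L$ one gets $I(\overline{H(u)})\cdot I(\overline{H(v)})\subseteq I(\overline{H(u)})\cap I(\overline{H(v)}) = I(\overline{H(u)}\cap\overline{H(v)}) = I(\emptyset)=\{0\}$, so $L$ is not prime. Note that, in contrast to Proposition~\ref{primenessproposition}, no analogue of the Condition~(L) argument appears here, precisely because $L_K(E)$ can be prime even when $E$ has a cycle without an exit (the single-loop graph gives $K[x,x^{-1}]$).

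For the converse, assume $E$ is downward directed, let $I,J$ be nonzero ideals of $L$, and aim to show $IJ\neq\{0\}$. The engine is the Reduction Theorem for Leavitt path algebras: every nonzero $a\in L$ admits paths $\mu,\eta$ with $0\neq\mu^* a\eta\in vLv$, and this element is either $kv$ for some $0\neq k\in K$, or (when $v$ lies on a cycle $c$ with no exit, so that $vLv\cong K[x,x^{-1}]$ via $c\mapsto x$) a nonzero Laurent polynomial in $c$. Reducing a nonzero $a\in I$ and a nonzero $b\in J$, we obtain $0\neq a'\in I\cap uLu$ and $0\neq b'\in J\cap vLv$ of this form, and downward directedness supplies $z\in E^0$ with $u\geq z$ and $v\geq z$. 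Now run the case analysis. If $a'=ku$ then $u\in I$, hence $z\in I$ (as $u\geq z$); symmetrically if $b'=k'v$ then $z\in J$, and $0\neq z=z\cdot z\in IJ$. If $a'=ku$ but $b'$ is a Laurent polynomial on a no-exit cycle $c_v$ at $v$, then $z\in I$ as before, while the path $v\geq z$ must wind around $c_v$ (no exit!), so $z\in c_v^0$; conjugating $b'$ by that winding path gives $0\neq b''\in zLz$ with $b''\in J$, and since $z\in I$ forces $zLz\subseteq I$ we get $0\neq b''=z\cdot b''\in IJ$; the mirror case is symmetric. Finally, if both $a'$ and $b'$ are Laurent polynomials on no-exit cycles at $u$ and $v$, the paths realising $u\geq z$ and $v\geq z$ both wind around those cycles, so the two cycles share the vertex $z$; since a vertex on a cycle without exit emits a unique edge, they must be the same cycle, and conjugation produces nonzero $a''\in I$ and $b''\in J$ inside $zLz\cong K[x,x^{-1}]$, whence $0\neq a''b''\in IJ$ because $K[x,x^{-1}]$ is a domain.

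I expect the main obstacle to be the bookkeeping around cycles without exits: checking that conjugating a reduced element by a path winding around such a cycle stays nonzero (this uses $vLv\cong K[x,x^{-1}]$ together with the fact that $ee^*=s(e)$ for every edge on the cycle, so the winding path corresponds to a unit of the Laurent ring), and checking that two cycles without exits meeting at a vertex must coincide. The remaining ingredients — the Reduction Theorem, the lattice structure of the graded ideals $I(H)$, the identification $vLv\cong K[x,x^{-1}]$ for a no-exit cycle, and the fact that an ideal containing the idempotent $u$ contains every $z$ with $u\geq z$ — are standard and can be cited from \cite{AAS} (or from \cite{ABR}, \cite{APS06}, where this proposition already appears); the only delicate point is threading them into the contradiction-free chain ``$IJ\neq\{0\}$'', and it is exactly this point that makes the statement genuinely different from, and neither weaker nor stronger than, the $C^*$-algebraic Proposition~\ref{primenessproposition}.
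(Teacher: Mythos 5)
Your argument is correct, but it is worth noting that the paper does not actually prove this proposition: it simply quotes it from \cite[Corollary~3.10]{APS06} (see also \cite[Theorem~2.4]{ABR}). What you have written is, in effect, a reconstruction of the standard proof from that literature, and all the ingredients you invoke (the Reduction Theorem, the description of the ideal generated by a hereditary set as $\operatorname{span}\{\alpha\beta^* : r(\alpha)=r(\beta)\in \overline{H}\}$, the corner isomorphism $vLv\cong K[x,x^{-1}]$ for $v$ on a cycle without exit, and the fact that $u\in I$ and $u\geq z$ force $z=\alpha^*u\alpha\in I$) are indeed available in \cite{AAS}. Your case analysis is sound: on a cycle without exit every vertex emits exactly one edge, so any path leaving a vertex of such a cycle stays on it (which justifies ``$z\in c_v^0$'' and the coincidence of the two cycles in your last case), each edge $e$ on the cycle satisfies $ee^*=s(e)$, so conjugation by the winding path $\gamma$ satisfies $\gamma\gamma^*=s(\gamma)$, $\gamma^*\gamma=z$ and is an isomorphism of corners, preserving nonzeroness. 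Two small remarks. First, in the non-primeness direction you can avoid appealing to the lattice identity $I(\overline{H(u)})\cap I(\overline{H(v)})=I(\overline{H(u)}\cap\overline{H(v)})$ (which for non-row-finite graphs requires a word about breaking vertices, though here $\emptyset$ has none, so it does hold): with the spanning-set description of the two ideals, a product $\alpha\beta^*\cdot\gamma\delta^*$ with $r(\beta)\in\overline{H(u)}$ and $r(\gamma)\in\overline{H(v)}$ is forced to vanish by hereditariness and $\overline{H(u)}\cap\overline{H(v)}=\emptyset$, giving $I(\overline{H(u)})\,I(\overline{H(v)})=\{0\}$ directly. Second, your closing observation is exactly the right moral and is the point the paper emphasizes after stating the proposition: Condition~(L) plays no role because a no-exit cycle contributes the domain $K[x,x^{-1}]$ on the algebraic side but the non-prime algebra $C(\T)$ on the $C^*$-side. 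In short, the paper's route buys brevity by citation; yours buys a self-contained equivalence and makes transparent where the algebraic and $C^*$-algebraic primeness criteria diverge.
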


We conclude this article with the following summary of comparisons of germane properties between Leavitt path algebras and graph $C^*$-algebras.  A proof that the indicated conditions on $E$ which yield the simplicity of $L_K(E)$ for any field $K$ is given in \cite{AAS}.  The remaining comparisons follow from   Proposition~\ref{LPA-prime-iff-prop} with Proposition~\ref{primenessproposition} and Theorem~\ref{CstarEprimitiveiffL(E)primitive}.

\bigskip
\bigskip

$\begin{array}{c} \text{$L_K(E)$ is} \\ \text{simple} \end{array}$ \hspace{-.15in}
$\iff$ \hspace{-.1in} $\begin{array}{c} \text{$C^*(E)$ is} \\ \text{simple} \end{array}$ \hspace{-.15in} $\iff$$\begin{array}{l} \text{$E$ is cofinal, and}  \\ \text{$E$ satisfies Condition (L)}\end{array}$

\bigskip
\bigskip

\begin{center}
\hspace{-0.63in} $L_K(E)$ prime $\iff$ $E$ is downward directed
\end{center}

\medskip

\begin{center}
$C^*(E)$ prime $\iff$ $\begin{array}{l} \text{$E$ is downward directed, and} \\ \text{$E$ satisfies Condition~(L)} \end{array}$
\end{center}

$ $

\bigskip

$ $

$\begin{array}{c} \text{$L_K(E)$ is} \\ \text{primitive} \end{array}$ \hspace{-.15in}
$\iff$ \hspace{-.1in} $\begin{array}{c} \text{$C^*(E)$ is} \\ \text{primitive} \end{array}$ \hspace{-.15in} $\iff$$\begin{array}{l} \text{$E$ is downward directed,} \\ \text{$E$ satisfies Condition~(L), and} \\ \text{$E$ has the Countable Separation Property}\end{array}$

$ $

$ $

\noindent In particular, we note that $C^*(E)$ is prime if and only if $L_K(E)$ is prime and $E$ satisfies Condition~(L).  Specifically, $C^*(E)$ prime implies $L_K(E)$ prime for every field $K$, but not conversely.

\end{document}